\def\procZ#1{\{#1\}_{\scriptscriptstyle n\in\ZZ}}
\def\procZk#1{\{#1\}_{\scriptscriptstyle k\in\ZZ}}
\def\intsurt{\int_0^{T_1}\!}
\def\DW{{\Delta W_{\theta,h}}}
\def\intDW{\int_0^\DW\mskip-15mu}
\def\intmDW{\int_0^{-\DW}\mskip-15mu}
\def\DWh{{\Delta W_{\theta+h,h}}}
\def\intDWh{\int_0^\DWh\mskip-15mu}
\def\intmDWh{\int_0^{-\DWh}\mskip-15mu}
\def\DDW{{\Delta^2 W_{\theta,h}}}
\begin{document}

\title{Stationary IPA Estimates for Non-Smooth G/G/1/$\infty$
       Functionals via Palm Inversion and Level-Crossing Analysis.
       \thanks{This article has been presented at the $31^{st}$ IEEE CDC,
               Dec.\ 16-18 1992, Tucson, Arizona, USA.}}

\author{
 Pierre Br\'emaud\\
  Laboratoire des Signaux et Syst\`emes, CNRS
  \thanks{Laboratoire des Signaux et Syst\`emes, CNRS - ESE, Plateau 
   du Moulon, 91190 Gif-sur-Yvette, France}
\and
 Jean-Marc Lasgouttes\\
  INRIA
  \thanks{INRIA, Domaine de Voluceau, Rocquencourt, B.P. 105, 78153
   Le Chesnay Cedex, France}}
\date{April 1992; revised January 1993, September 1993}

\maketitle

\begin{abstract}
We give stationary estimates for the derivative of the expectation of
a non-smooth function of bounded variation $f$ of the workload in a
G/G/1/$\infty$ queue, with respect to a parameter influencing the
distribution of the input process. For this, we use an idea of
Konstantopoulos and Zazanis \cite{KonZaz:1} based on the Palm
inversion formula, however avoiding a limiting argument by performing
the level-crossing analysis thereof globally, via Fubini's theorem.
This method of proof allows to treat the case where the workload
distribution has a mass at discontinuities of $f$ and where the
formula of \cite{KonZaz:1} has to be modified. The case where the
parameter is the speed of service or/and the time scale factor of the
input process is also treated using the same approach.
\end{abstract}

\section{Introduction.}\label{sec:introduction}

Consider a stationary G/G/1/$\infty$ queue in which customers arrive
according to a stationary process $\procZ{T_n}$. The customer $n$ asks
for a service time $\sigma_n(\theta)$, where $\theta$ is a real
parameter in the compact interval $\Theta$ and
$\procZ{\sigma_n(\theta)}$ is an i.i.d\ sequence. Let $\procZ{\tau_n}$
denote the inter-arrival times process satisfying
$\tau_n=T_{n+1}-T_n$. Assume that the queue is stationary and let
$W_\theta(t)$ be the remaining work in the system at time $t$---see
Figure \ref{fig-w}--- given by Lindley's equation
\begin{figure}
\centering
\input{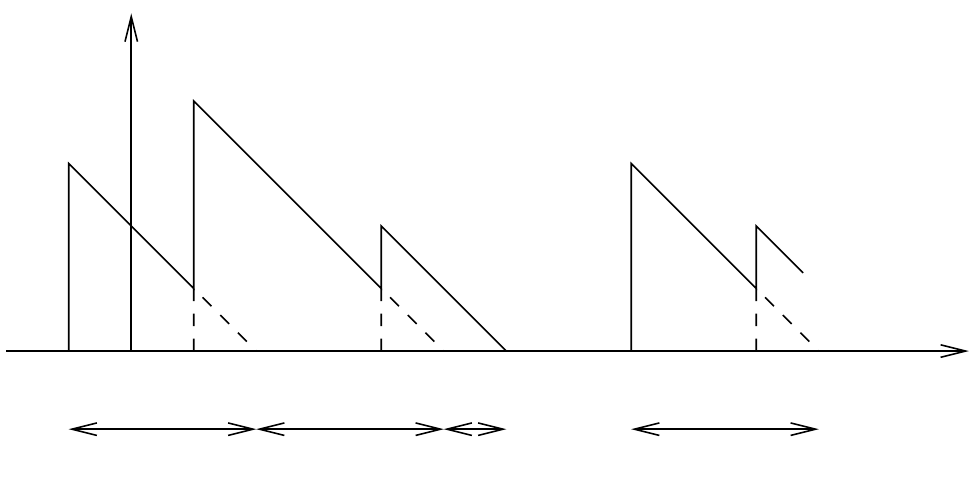tex}
\caption{workload of a G/G/1 queue.}
\label{fig-w}
\end{figure}
\begin{equation}\label{eq:lindley}
W_\theta(t)=\Bigl(W_\theta(T_n-)+\sigma_n(\theta)-(t-T_n)\Bigr)^+,\
t\in[T_n,T_{n+1}),
\end{equation}
with the notation $x^+\egaldef \max(x,0)$. Given a real function
$f$, consider the functional $J(\theta)$ defined as
$$
J(\theta)\egaldef\EE f(W_\theta(0)).
$$

We want to estimate, if it exists, the derivative of $J$ with respect
to $\theta$. To this end, we use Infinitesimal Perturbation Analysis
(IPA), a method first introduced by Ho and Cao~\cite{HoCao:2} and
further developed by Cao~\cite{Cao:1}, Suri and
Zazanis~\cite{SurZaz:1} and recently Konstantopoulos and
Zazanis~\cite{KonZaz:1}. Glasserman~\cite{Gla:1} and Ho and
Cao~\cite{HoCao:1} summarize and review most previous results on IPA.
Alternative methods have been used to estimate derivatives, namely
Smooth Perturbation Analysis (SPA, see Suri and
Zazanis~\cite{ZazSur:3}, Gong and Ho~\cite{GonHo:1}, Glasserman and
Gong~\cite{GlaGon:1}, Fu and Hu~\cite{FuHu:2}), Likelihood Ratio
Method (LRM, see e.g.\ Reiman and Weiss~\cite{ReiWei:1} or
Glynn~\cite{Gly:1}) and Rare Perturbation Analysis (RPA, see Br\'emaud
and V\'azquez-Abad~\cite{BreVaz:1} and Br\'emaud~\cite{Bre:1}).

In this article, we aim to prove that, under appropriate conditions
\begin{eqnarray}\label{eq:baseIPA}
\longeqn\lim_{h\to0}{1\over h}\Bigl[\EE f(W_{\theta+h}(0))-\EE f(W_\theta(0))\Bigr]\\
&=& \EE\lim_{h\to0}{1\over h}\Bigl[f(W_{\theta+h}(0))- f(W_\theta(0))\Bigr]
= \EE \d\theta{}f(W_\theta(0))\nonumber
\end{eqnarray}
and we give a formula replacing~(\ref{eq:baseIPA}) when $f$ is not
differentiable but is of bounded variation. This formula was obtained
by Konstantopoulos and Zazanis~\cite{KonZaz:1} under stronger
assumptions on the service times distributions. However, due to the
difficulty of passing to the limit in their approximation procedure,
their formula does not give any insight on the equality of the
left-hand and right-hand derivatives ; this information is crucial for
practical use of the derivative estimator. Our method of proof avoids
the passage to the limit and therefore allows for better control of
the computations.  Moreover, it can be extended in many ways to handle
different situations.

The article is organized as follow: in Section~\ref{sec:construction},
we give a construction of the G/G/1 queue and we derive some basic
properties. The main result of the article is given in
Section~\ref{sec:service} and the same method is applied to
second-order derivatives in Section~\ref{sec:2ndorder};
Section~\ref{sec:other} shows how our method can be extended to other
parameters, respectively the speed of the server and the rate of
arrival in the system. Section~\ref{sec:estimate} discusses the
implementation of the estimates and a short review of Palm
probabilities can be found in the appendix.

\section{Construction of the G/G/1 queue.}\label{sec:construction}

In a formula like (\ref{eq:baseIPA}), the probability space does not
depend on $\theta$. To obtain this independence, we use the inversion
representation (see Suri~\cite{Sur:3}) to generate service times: let
$\procZ{\xi_n}$ be a sequence of random variables uniformly
distributed on $[0,1]$. Let $F(\cdot,\theta)$ be the common
distribution function of service times; we can define its inverse
function
$$
G(\xi,\theta)=\sup \bigl(x\geq0\ :\
F(x,\theta)\leq\xi\bigr).
$$

Then $\sigma_n(\theta)\egaldef G(\xi_n,\theta)$ is distributed
according to $F(\cdot,\theta)$. This means that, if we choose as basic
stationary random sequences $\procZ{\tau_n}$ and $\procZ{\xi_n}$, we
define the queue on a probability space independent from $\theta$. We
note $\lambda$ the intensity of the input process and $\PP^0$ the
associated Palm probability---see Appendix for notations and details.
In order to apply IPA, the following assumption on service times is
needed:

\begin{ass}\label{ass:service}
The distribution of service times verifies the following conditions:
\begin{enumerate}
\item $\theta\mapsto G(\xi,\theta)$ is differentiable and
Lipschitz, that is
$$
\bigl|G(\xi,\theta_1)-G(\xi,\theta_2)\bigr|\leq
K^\sigma(\xi)|\theta_1-\theta_2|,
 \ \forall\,\theta_1,\theta_2\in\Theta;
$$
\item $\lambda\EE^0\sigma^*_0<1$, with the notation 
$\sigma^*_n\egaldef\sup_{\theta\in\Theta}\sigma_n(\theta)
=\sup_{\theta\in\Theta}G(\xi_n,\theta)$.
\end{enumerate}
\end{ass}

Condition \ref{ass:service}-\romi\ ensures that we have enough
smoothness with respect to $\theta$ in the distribution of the service
times.  However, in a number of cases, $\xi_n$ will not be directly
known, in particular when observing a real experiment; this difficulty
can be overcome with the following classical proposition (
Suri~\cite{Sur:3}; for this formulation see Glasserman~\cite{Gla:1}):

\begin{envthm}{Proposition}\it
Suppose that \romi\ $F(\cdot,\theta)$ has a density $\partial_x
F(\cdot,\theta)$ which is strictly positive on an open interval
$I_\theta$ and zero elsewhere; and \romii\ $F$ is continuously
differentiable on $I_\theta\times\Theta$. Then
$$
\sigma'(\theta)=-{\partial_\theta F(\sigma(\theta),\theta)
                 \over \partial_x F(\sigma(\theta),\theta)}.
$$
\end{envthm}

In the above formula, the prime denotes the derivative with respect to
$\theta$. A case of particular interest is when $\theta$ is a scale
parameter of the service times, that is when
$\sigma(\theta)=\theta\eta$ for some random variable $\eta$. Then we
have directly
$$
\sigma'(\theta)=\eta={\sigma(\theta)\over\theta}.
$$

In particular, we do not need to know the real distribution of service
times unless we actually want to simulate them. Note that
\ref{ass:service}-\romi\ is similar to assumption \romi\ of Section~1 in
Konstantopoulos and Zazanis~\cite{KonZaz:1}; it is the classical
assumption on smoothed distributions needed for IPA.

\begin{figure}
\centering
\input{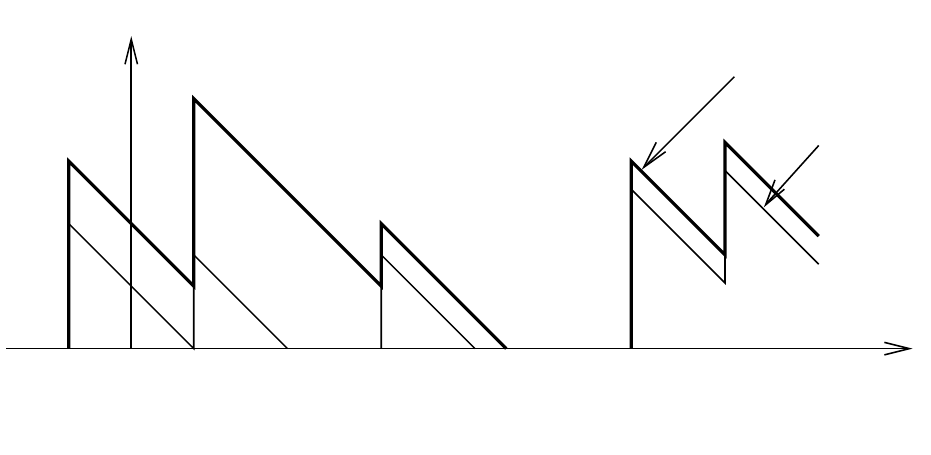tex}
\caption{the domination property.}
\label{fig-domi}
\end{figure}
Using Assumption \ref{ass:service}-\romii, we derive a bound on the
size of the busy periods of the system for all possible values of
$\theta$.  We shall note that we don't know {\em a priori} whether
$\sigma^*_n$ is $\PP^0$-a.s.\ finite or not; however, this condition
is weaker than assumption \romii\ and \romiii\ of~\cite{KonZaz:1}.
With that in mind, let $\procZk{R^*_k}$ be the regeneration times at
which the arriving customers of the $*$-system find the queue
empty---the $*$-system is the queue with service times
$\procZ{\sigma^*_n}$, whereas the $\theta$-system uses service times
$\procZ{\sigma_n(\theta)}$. We can build the $\theta$-system from the
busy period process $\procZk{R_k^*}$ but with the service times given
by $\procZ{\sigma_n(\theta)}$, so that the following domination
property holds for the respective stationary workload of the
queues---see Figure~\ref{fig-domi}:
\begin{equation}\label{eq:domi}
W_\theta(t)\leq W^*(t),\ \forall\,\theta\in\Theta,\ \forall\ t\in\RR.
\end{equation}
 
With the above construction, we get
$\procZk{R^*_k}\subseteq\procZk{R_k(\theta)}$, where
$\procZk{R_k(\theta)}$---or simply $\procZk{R_k}$---denotes the
beginning of busy period process for the $\theta$-system. Moreover, we
have the boundary property
\begin{equation}\label{eq:bound}
R^*_-(t) \leq R_-(\theta)(t) \leq t < R_+(\theta)(t) \leq R_+^*(t).
\end{equation}

\section{An IPA estimator for general non-decreasing functions.}
\label{sec:service}

In this section, we show that IPA applies with any non-decreasing {\em
c\`adl\`ag} function $f$. But since $f$ is not required to be
continuous, we cannot apply (\ref{eq:baseIPA}) as such. First of all,
we need to introduce an assumption similar to assumptions {\bf A1},
{\bf A2} and {\bf A3$'$} of Konstantopoulos and
Zazanis~\cite{KonZaz:1}:

\begin{ass}\label{ass:tp}
The following inequalities hold:
\begin{enumerate}
\item $\EE^0[K^\sigma(\xi_0)]^4<\infty;$
\item $\EE^0[A([R_0^*,R_1^*))]^4<\infty;$
\item $\EE^0[f(W^*(0))]^2<\infty.$
\end{enumerate}
\end{ass}

\begin{thm}\label{thm:princ}
Let $\mu_f$ be the measure on $\RR$ associated with $f$. Assume
\ref{ass:service} and \ref{ass:tp} hold. Then $J$ admits a right
derivative with respect to $\theta$ given by
\begin{eqnarray}\label{eq:tp:Jr}
J'_r(\theta)
&=&\lambda\EE^0W'_\theta(0)\biggl[f(W_\theta(0))
                                 -f(W_\theta(T_1-))\nonumber\\
& &\mbox{\qquad}-\1_{\{W'_\theta(0)<0\}}
           \Bigl[\mu_f(\{W_\theta(0)\})
                -\mu_f(\{W_\theta(T_1-)\})\Bigr]\biggr],
\end{eqnarray}
and its left derivative is
\begin{eqnarray}\label{eq:tp:Jl}
J'_l(\theta)
&=&\lambda\EE^0W'_\theta(0)\biggl[f(W_\theta(0))
                                 -f(W_\theta(T_1-))\nonumber\\
& &\mbox{\qquad}-\1_{\{W'_\theta(0)>0\}}
          \Bigl[\mu_f(\{W_\theta(0)\})
               -\mu_f(\{W_\theta(T_1-)\})\Bigr]\biggr].
\end{eqnarray}
\end{thm}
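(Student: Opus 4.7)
The strategy, as advertised in the abstract, is to apply Palm inversion and then carry out the level-crossing analysis globally through Fubini, avoiding the smoothing-and-limit step of~\cite{KonZaz:1}.

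\textbf{Step 1 (Palm inversion and Fubini).} Palm inversion reduces the problem to a single Palm cycle,
$$
J(\theta+h)-J(\theta)=\lambda\,\EE^0\intsurt\bigl[f(W_{\theta+h}(s))-f(W_\theta(s))\bigr]\,ds,
$$
and, since $f$ is non-decreasing c\`adl\`ag with associated measure $\mu_f$, writing $f(b)-f(a)=\int_\RR\1_{(a,b]}(y)\,\mu_f(dy)$ and applying Fubini turns the inner integral into $\int_\RR[A^+(y,h)-A^-(y,h)]\,\mu_f(dy)$, where $A^+(y,h)=\mathrm{Leb}\{s\in[0,T_1]:W_\theta(s)<y\leq W_{\theta+h}(s)\}$ and $A^-$ is the symmetric quantity with the two workloads swapped. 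Under $\PP^0$ there is no arrival in $(0,T_1)$, so $W_\cdot(s)=(W_\cdot(0)-s)^+$ and each set appearing above is an explicit interval with endpoints among $\{0,T_1,W_\theta(0)-y,W_{\theta+h}(0)-y\}$.

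\textbf{Step 2 (Pathwise computation and $h\to 0^+$).} A short case analysis, separating the regime where neither trajectory empties before $T_1$ from the regime where one or both hits zero, represents $A^\pm(y,h)$ as a piecewise linear function of $y$ with break-points among $W_\theta(0),W_{\theta+h}(0),W_\theta(T_1-),W_{\theta+h}(T_1-)$. Dividing by $h$ and letting $h\to 0^+$, the bulk contribution of the $\mu_f$-integral over the interior of the band $[W_\theta(T_1-),W_\theta(0)]$ yields the main term $W'_\theta(0)[f(W_\theta(0))-f(W_\theta(T_1-))]$. What remains are the $O(h)$-neighbourhoods of the two endpoints: when $W'_\theta(0)>0$ these are approached from above and right-continuity of $f$ absorbs any $\mu_f$-atom there, while when $W'_\theta(0)<0$ they are approached from below and the atoms $\mu_f(\{W_\theta(0)\})$ and $\mu_f(\{W_\theta(T_1-)\})$ enter with opposite signs, producing the indicator correction in~(\ref{eq:tp:Jr}).

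\textbf{Step 3 (Interchange and left derivative).} To move the pathwise limit inside $\EE^0$ I would use Assumption~\ref{ass:service}-\romi\ together with the construction of Section~\ref{sec:construction} to bound $|W_{\theta+h}(0)-W_\theta(0)|$ by $|h|\sum_n K^\sigma(\xi_n)$ summed over customers served in the $*$-busy period covering $0$, via (\ref{eq:domi})--(\ref{eq:bound}); two applications of Cauchy--Schwarz then combine the three $L^4$-hypotheses of Assumption~\ref{ass:tp} into an integrable majorant, and dominated convergence justifies the interchange. For the left derivative~(\ref{eq:tp:Jl}) I would redo Steps~1--2 with $h<0$: the two workloads swap roles in $A^\pm$, so the direction of approach to the two endpoints flips and $\1_{\{W'_\theta(0)<0\}}$ is replaced by $\1_{\{W'_\theta(0)>0\}}$. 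The pathwise computation of Step~2 is routine bookkeeping; the real obstacle is Step~3, because without any smoothing of $f$ one has to control a difference quotient of $f\circ W_\cdot$ uniformly in $h$, and this is precisely what the fourth-moment conditions in Assumption~\ref{ass:tp} are tuned to afford.
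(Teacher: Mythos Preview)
Your proposal is correct and follows essentially the same route as the paper: Palm inversion, the Fubini-based level-crossing representation $\int_0^{T_1}\varphi(x,t)\,dt$ (your $A^+-A^-$), the same case analysis on the sign of $W'_\theta(0)$ with the atom correction arising exactly when the endpoints are approached from below, and the same domination argument via $K^W_\theta(0)$ and Cauchy--Schwarz. Two small points to tighten in a full write-up: Assumption~\ref{ass:tp} has two $L^4$ moments and one $L^2$ moment (not three $L^4$), and the paper invokes a stationary Wald-type identity (Lemma~\ref{lem:sum} in the Appendix) to control $\EE^0[K^W_\theta(0)]^2$ from the moment of $A([R_0^*,R_1^*))$, which you should make explicit.
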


\begin{example}\label{ex:proba}
With $f(w)=\1_{\{w\geq x\}}$, Theorem~\ref{thm:princ} yields
\begin{eqnarray*}
\d\theta{{}_r}\PP(W_\theta(0)>x)
&=&\lambda\EE^0W'_\theta(0)\biggl[
      \1_{(W_\theta(T_1-),W_\theta(0)]}(x)\\
& &\mbox{\qquad}-\1_{\{W'_\theta(0)<0\}}
           \Bigl[\1_{\{W_\theta(0)=x\}}
                -\1_{\{W_\theta(T_1-)=x\}}\Bigr]\biggl]\\
\d\theta{{}_l}\PP(W_\theta(0)>x)
&=&\lambda\EE^0W'_\theta(0)\biggl[
      \1_{(W_\theta(T_1-),W_\theta(0)]}(x)\\
& &\mbox{\qquad}-\1_{\{W'_\theta(0)>0\}}
           \Bigl[\1_{\{W_\theta(0)=x\}}
                -\1_{\{W_\theta(T_1-)=x\}}\Bigr]\biggl].\\
\end{eqnarray*}
\end{example}

Theorem \ref{thm:princ} shows that $J(\theta)$ admits right and left
derivatives even when $f$ is not continuous. But in a number of cases,
we can get the equality of these two derivatives:

\begin{cor}\label{cor:princ}
Assume \ref{ass:service} and \ref{ass:tp} hold. If $f$ is continuous
or if $W_\theta(0)$ and $W_\theta(T_1-)$ admit densities with respect
to $\PP^0$ then $J(\theta)$ is differentiable and 
\begin{equation}\label{eq:cp:Jprime}
J'(\theta)=\lambda\EE^0W'_\theta(0)[f(W_\theta(0))-f(W_\theta(T_1-))].
\end{equation}
\end{cor}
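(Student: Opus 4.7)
The plan is to derive the corollary as a direct consequence of Theorem~\ref{thm:princ} by showing that, in each of the two cases, the ``correction terms'' distinguishing $J'_r(\theta)$ from $J'_l(\theta)$ vanish, and that the common expression then simplifies to~(\ref{eq:cp:Jprime}).

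First, I would subtract (\ref{eq:tp:Jl}) from (\ref{eq:tp:Jr}) to get
$$
J'_r(\theta)-J'_l(\theta)
 =\lambda\EE^0 W'_\theta(0)\bigl(\1_{\{W'_\theta(0)>0\}}-\1_{\{W'_\theta(0)<0\}}\bigr)
   \bigl[\mu_f(\{W_\theta(0)\})-\mu_f(\{W_\theta(T_1-)\})\bigr],
$$
i.e.\ the integrand is $|W'_\theta(0)|\bigl[\mu_f(\{W_\theta(0)\})-\mu_f(\{W_\theta(T_1-)\})\bigr]$. Since Assumption~\ref{ass:service}-\romi\ together with the explicit expression of $W'_\theta(0)$ constructed in Section~\ref{sec:construction} provide integrability of $|W'_\theta(0)|$, it suffices to show that the atomic contributions $\mu_f(\{W_\theta(0)\})$ and $\mu_f(\{W_\theta(T_1-)\})$ vanish $\PP^0$-almost surely in each of the two cases.

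In the continuous case, since $\mu_f$ is the Lebesgue--Stieltjes measure associated with a cadlag function $f$, the atoms of $\mu_f$ are exactly the jump points of $f$. If $f$ is continuous, there are no such points, so $\mu_f(\{x\})=0$ for every $x\in\RR$ and the correction terms vanish identically. In the density case, I would use the classical observation that the set $D$ of atoms of $\mu_f$ is at most countable. The assumption that both $W_\theta(0)$ and $W_\theta(T_1-)$ admit $\PP^0$-densities then gives $\PP^0(W_\theta(0)\in D)=\PP^0(W_\theta(T_1-)\in D)=0$, whence $\mu_f(\{W_\theta(0)\})=\mu_f(\{W_\theta(T_1-)\})=0$ $\PP^0$-a.s.

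In either case $J'_r(\theta)=J'_l(\theta)$, so $J$ is differentiable, and Theorem~\ref{thm:princ} reduces to (\ref{eq:cp:Jprime}). There is no real obstacle here: the only point requiring a touch of care is the density case, where one must recall that atoms of a Stieltjes measure form a countable set so as to kill them against an absolutely continuous law; everything else is bookkeeping on top of Theorem~\ref{thm:princ}.
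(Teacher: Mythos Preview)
Your argument is correct and follows essentially the same route as the paper: in both cases one shows that the correction terms $\mu_f(\{W_\theta(0)\})$ and $\mu_f(\{W_\theta(T_1-)\})$ vanish $\PP^0$-a.s., so that (\ref{eq:tp:Jr}) and (\ref{eq:tp:Jl}) coincide and collapse to (\ref{eq:cp:Jprime}). The only cosmetic difference is that in the density case the paper phrases the key step as ``$\mu_f(\{\cdot\})=0$ Lebesgue-a.e., hence $\int \mu_f(\{w\})\gamma^0(w)\,dw=0$'', while you phrase it as ``the atom set $D$ is countable, hence $\PP^0(W_\theta(0)\in D)=0$''; these are the same fact.
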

\begin{proof}{}
If $f$ is continuous, then $w\mapsto \mu_f(\{w\})\equiv 0$. If
$W_\theta(0)$ admits a $\PP^0$-density, say $\gamma^0(w)$, we can use
the fact that $\mu_f(\{\cdot\})=0$ almost everywhere for the Lebesgue
measure:

\begin{eqnarray*}
|\EE^0\1_{\{W'_\theta(0)<0\}}\mu_f(\{W_\theta(0)\})|
&\leq& \EE^0\mu_f(\{W_\theta(0)\})\\
&=&    \int_0^\infty \mu_f(\{w\})\gamma^0(w)\,dw=0.
\end{eqnarray*}

In either case, the result is proved.
\end{proof}

\begin{remark}
In the case where $f$ admits a derivative $f'$, we can use the
inversion formula (\ref{eq:inversion}) of Appendix and write
(\ref{eq:cp:Jprime}) as
$$
J'(\theta)=\lambda\EE^0[\int_0^{T_1}W'_\theta(t)f'(W_\theta(t))dt]
=\EE[W'_\theta(0)f'(W_\theta(0))],
$$
thus obtaining the expected IPA estimate (\ref{eq:baseIPA}). In this
computation, we used the fact that $W'_\theta(t)$ is constant between
arrivals during busy periods, and zero during idle periods. A
comparison between the two estimates is made in Section~\ref{sec:estimate}.
\end{remark}

Before starting the proof of the theorem, let us mention that our
derivation is different from Konstantopoulos and Zazanis
\cite{KonZaz:1} in two respects: first we do not require an
approximation procedure and we treat directly a non decreasing
function $f$. This is made possible by the simple crucial observation
that
$$
f(y)-f(x)=\int_{(x,y]}\mu_f(dz)\ \mbox{ for all }x\leq y,
$$
which allows us to have a better view of the residual terms in the
level crossing analysis that follows. The result can be applied to any
function of bounded variation if assumption \ref{ass:tp} is verified
by both the increasing and decreasing parts of the function. Secondly,
we do not need switch back and forth between the Palm probabilities
with respect to the arrival process and with respect to the
regeneration points as in \cite{KonZaz:1}. However, we retain the
fundamental idea of \cite{KonZaz:1} by starting with its expression in
terms of the Palm probability $\PP^0$.

\begin{proof}{of Theorem~\ref{thm:princ}}
Assume that $f(0)=0$, so that $f$ is non-negative. The Palm inversion
formula~(\ref{eq:inversion}) gives
\begin{eqnarray*}
\EE f(W_\theta(0))
&=& \lambda\EE^0\intsurt f(W_\theta(t))\,dt\\
&=& \lambda\EE^0\intsurt\int_{\RR_+}\1_{\{W_\theta(t)>x\}}
  \,\mu_f(dx)\,dt\\
&=& \lambda\EE^0\int_{\RR_+}\intsurt\1_{\{W_\theta(t)>x\}}
  \,dt\,\mu_f(dx)
\end{eqnarray*}
and therefore
\begin{eqnarray*}
\longeqn {1\over h}\EE[f(W_{\theta+h}(0))-f(W_\theta(0))]\\
 &=& {\lambda\over h}\EE^0\int_{\RR_+}\intsurt
  [\1_{\{W_{\theta+h}(t)>x\}}-\1_{\{W_\theta(t)>x\}}]\,dt\,\mu_f(dx).
\end{eqnarray*}

In order to simplify the notations, let:
\begin{eqnarray*}
\varphi(x,t)
  &\egaldef&\1_{\{W_{\theta+h}(t)>x\}}-\1_{\{W_\theta(t)>x\}}\\
\Phi(\theta,h)
  &\egaldef& \int_{\RR_+}\intsurt\varphi(x,t)\,dt\,\mu_f(dx).
\end{eqnarray*}

The first step of our proof is to compute
$\lim_{h\to0}\Phi(\theta,h)/h$. We will have to integrate a function
taking its values in $\{-1,0,1\}$ with respect to $dt\,\mu_f(dx)$.
Define also for any $t\in[0,T_1)$:
$$
\DW \egaldef W_{\theta+h}(t)-W_\theta(t).
$$

\begin{figure}
\centering
\input{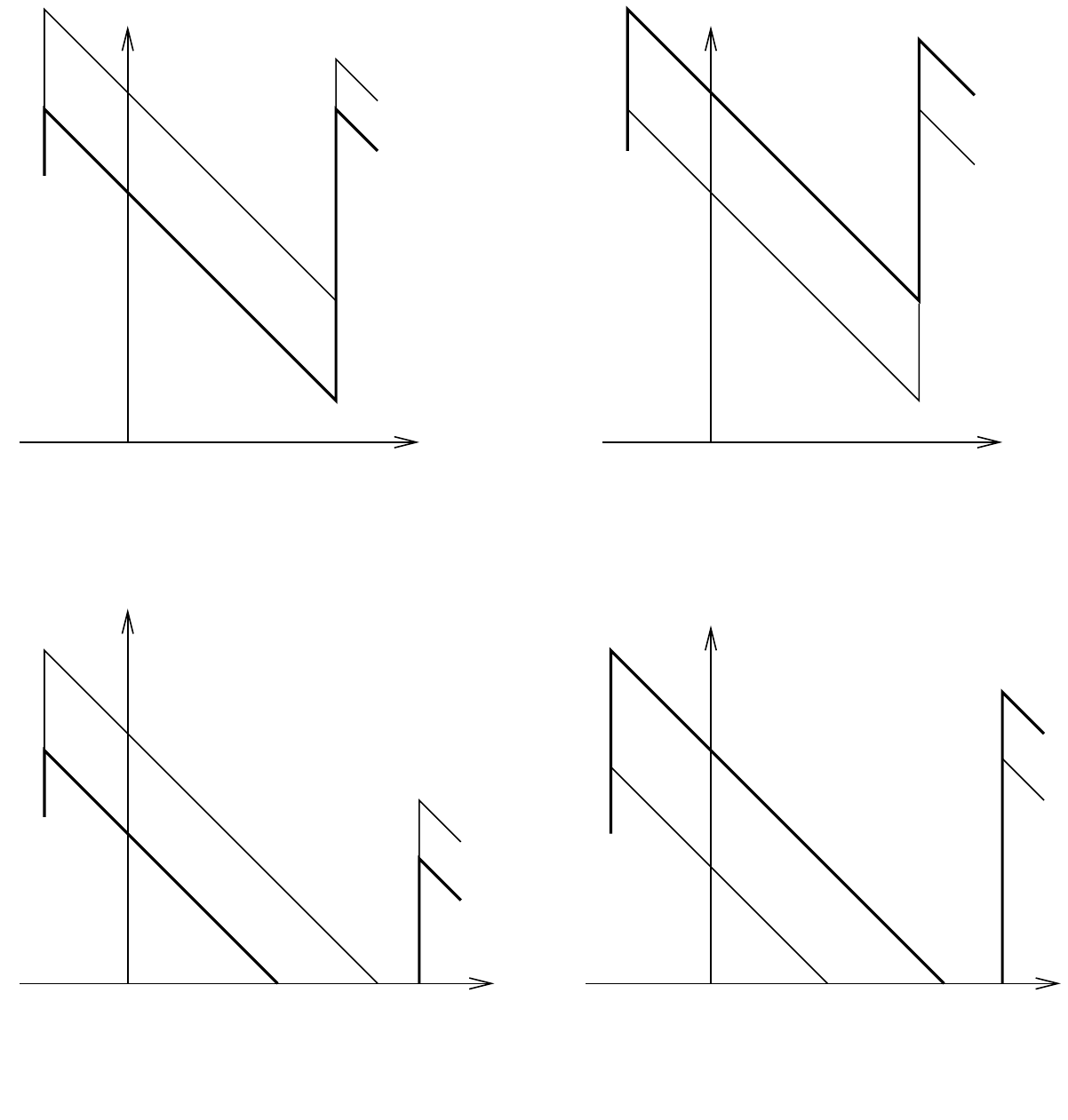tex}
\caption{Four different cases for the computation of $\Phi$.}
\label{fig-cs}
\end{figure}
Assume first that $h>0$. As shown in Figure \ref{fig-cs}, we must
consider different cases depending on the relative position of
$W_\theta(0)$ and $W_{\theta+h}(0)$. We have to add cases 3 and 3$'$,
where $W'_\theta(0)=0$, preventing us to guess their relative
positions. In fact, all the terms of the formula can be found in the
first two cases and we will leave the other ones to the reader's
attention.

\begin{envthm}{Case 1:}
\begin{figure}[bth]
\centering
\input{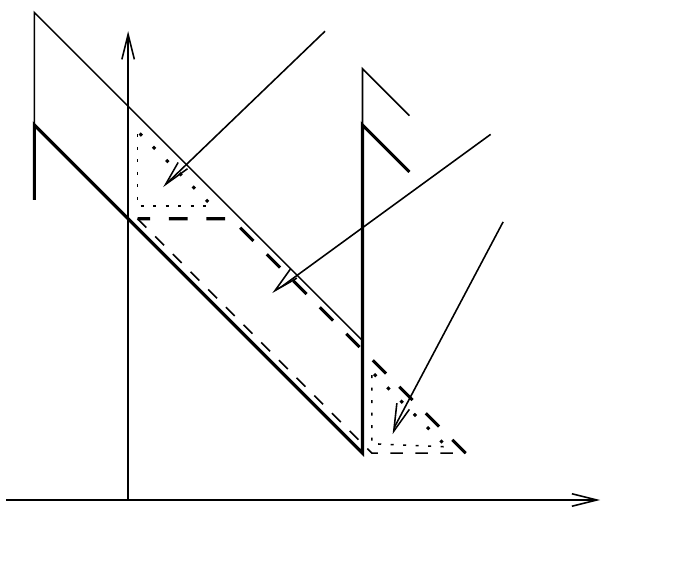tex}
\caption{computation of $\Phi$ in case 1.}
\label{fig-c1}
\end{figure}
for $h$ small enough, $W_{\theta+h}(0)>W_\theta(0)$ and $\varphi=1$.
The way to compute $\Phi(\theta,h)$ can be best understood with the
help of Figure~\ref{fig-c1}. $\Phi$ is equal to the area with a dashed
border plus the dotted triangle on the left, minus the right one. Here
the borders included in the areas are in bold; since all functions are
{\em c\`adl\`ag}, these borders are the top and right ones.
\begin{eqnarray}\label{eq:tp:case1}
{1\over h}\Phi(\theta,h)
&=& \Bigl[f(W_\theta(0))-f(W_\theta(T_1-))\Bigr]
  {\DW \over h}\nonumber\\[2mm]
& & {}+{1\over h}\intDW
    \mu_f((W_\theta(0),W_\theta(0)+y])\,dy\nonumber\\[2mm]
& & {}-{1\over h}\intDW
    \mu_f((W_\theta(T_1-),W_\theta(T_1-)+y])\,dy.
\end{eqnarray}

The first term converges to
$[f(W_\theta(0))-f(W_\theta(T_1-))]W'_\theta(0)$. Moreover,
$$
\mu_f((W_\theta(0),W_\theta(0)+y])=\mu_f((W_\theta(0),W_\theta(0)+y))+\mu_f(\{W_\theta(0)+y\})
$$
and since $\mu_f(\{W_\theta(0)+y\})=0$ $dy$-a.e., the second term of
r.h.s.\ of equation (\ref{eq:tp:case1}) reads
$$
{1\over h}\intDW
    \mu_f((W_\theta(0),W_\theta(0)+y))\,dy,
$$
which is less or equal than
$$
(W'_\theta(0)+o(1))\,\mu_f((W_\theta(0),W_{\theta+h}(0))).
$$

Since $W_\theta(0)$ is continuous in the neighborhood of $\theta$,
this goes to zero with $h$. The third term converges to 0 for
the same reasons. So we have in this case:
$$
\lim_{h\to0+}{1 \over h}\Phi(\theta,h)
=[f(W_\theta(0))-f(W_\theta(T_1-))]W'_\theta(0).
$$
\end{envthm}

\begin{envthm}{Case 2:}
\begin{figure}
\centering
\input{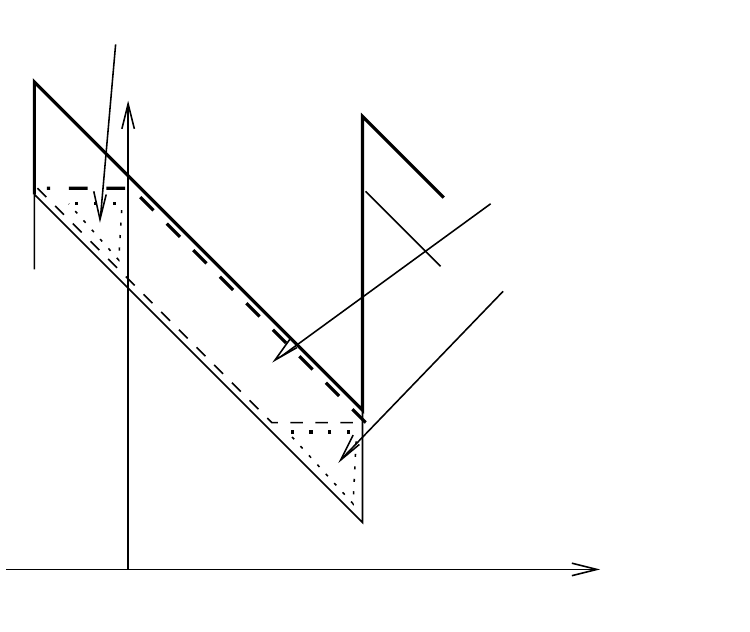tex}
\caption{computation of $\Phi$ in case 2.}
\label{fig-c2}
\end{figure}
here $W_{\theta+h}(0)<W_\theta(0)$ and $\varphi=-1$. Due to the order
of $W_{\theta+h}(0)$ and $W_\theta(0)$, we find a formula different
from equation (\ref{eq:tp:case1})---see Figure~\ref{fig-c2}:
\begin{eqnarray}\label{eq:tp:case2}
{1\over h}\Phi(\theta,h)
&=&
-\biggl\{\Bigl[f(W_\theta(0))-f(W_\theta(T_1-))\Bigr]{-\DW \over h}\nonumber\\[2mm]
& & \mbox{}-{1\over h}\intmDW
    \mu_f((W_\theta(0)-y,W_\theta(0)])\,dy\nonumber\\[2mm]
& & \mbox{}+{1\over h}\intmDW
    \mu_f((W_\theta(T_1-)-y,W_\theta(T_1-)])\,dy\biggl\}.
\end{eqnarray}

The first term is the same as in case 1, but the second is equal to
$$
{1\over h}\intmDW\mu_f((W_\theta(0)-y,W_\theta(0)))\,dy
  \;-\;\mu_f(\{W_\theta(0)\}){\DW\over h}
$$
and its limit is $\mu_f(\{W_\theta(0)\})W'_\theta(0)$. The last term
of $\Phi(\theta,h)/h$ is computed in a similar way. Finally:
\begin{eqnarray*}
 \lim_{h\to0+}{1 \over h}\Phi(\theta,h)
& = & \Bigl[f(W_\theta(0))-f(W_\theta(T_1-))\\
&   & \mbox{\quad}-\mu_f(\{W_\theta(0)\})
                  +\mu_f(\{W_\theta(T_1-)\})\Bigr]W'_\theta(0).
\end{eqnarray*}
\end{envthm}

We can summarize the above cases in the following formula:
\begin{eqnarray*}
 \lim_{h\to0+}{1 \over h}\Phi(\theta,h)
& = & W'_\theta(0)\Bigl[f(W_\theta(0))-f(W_\theta(T_1-))\\
&   & \mbox{}-\1_{\{W'_\theta(0)<0\}}[\mu_f(\{W_\theta(0)\})
         -\mu_f(\{W_\theta(T_1-)\})]\Bigr].
\end{eqnarray*}

The next step is to find a bound for $\Phi(\theta,h)/h$ which has a
finite mean with respect to $\PP^0$. The formulas for each case give:
\begin{eqnarray*}
\Bigl|{1 \over h}\Phi(\theta,h)\Bigr|
&\leq& \Bigl(f(W_\theta(0))-f(W_\theta(T_1-))\Bigr)
       \Bigl|{\DW\over h}\Bigr|\\
& &\mbox{}+|f(W_{\theta+h}(0))-f(W_\theta(0))|
   \cdot\Bigl|{\DW\over h}\Bigr|\\
& &\mbox{}+|f(W_{\theta+h}(T_1-))-f(W_\theta(T_1-))|
   \cdot\Bigl|{\DW\over h}\Bigr|\\
&\leq& 3f(W^*(0))K^W_\theta(0).
\end{eqnarray*}
The last inequality takes advantage of the fact that $f$ is
non-decreasing and of the domination property (\ref{eq:domi}).
$K^W_\theta(t)$ is a Lipschitz coefficient for $W(t)$ w.r.t.\
$\theta$. Finally,
$$
\Bigl|{1 \over h}\Phi(\theta,h)\Bigr|
\leq 3f(W^*(0))K^W_\theta(0).
$$

The latter expression is independent from $h$. Moreover, it has a
finite mean under $\PP^0$: from Cauchy-Schwartz inequality,
$$
\EE^0\Bigl[f(W^*(0))K^W_\theta(0)\Bigr]
\leq \sqrt{\EE^0[f(W^*(0))]^2}\sqrt{\EE^0[K^W_\theta(0)]^2}.
$$

The first mean is finite from assumption \ref{ass:tp}-\romiii. To
prove that the second one is also finite, we must first give an
expression of $K^W_\theta(0)$:

\begin{eqnarray*}
\Bigl|{W_{\theta+h}(0)-W_\theta(0)\over h}\Bigr|
& \leq & \Bigl|{W_{\theta+h}(T_{-1})-W_\theta(T_{-1})\over h}\Bigr|
         +\Bigl|{\sigma_0(\theta+h)-\sigma_0(\theta)\over h}\Bigr|\\
& \leq & \sum_{n\in\ZZ}
          \Bigl|{\sigma_n(\theta+h)-\sigma_n(\theta)\over h}\Bigr|
          \1_{[R_-^*(T_0),0)}(T_n)\\
& \leq & \sum_{n\in\ZZ}
         K^\sigma(\xi_n)\1_{[R^*_-(0),R^*_+(0))}(T_n)\\
& \egaldef & K^W_\theta(0).
\end{eqnarray*}

The first inequality comes from equation (\ref{eq:lindley}) and
inequality $|a^+-b^+|\leq|a-b|$; then we use the boundary property
(\ref{eq:bound}) and last the Lipschitz property
\ref{ass:service}-\romi. To prove that $\EE^0[K^W_\theta(0)]^2$ is
finite, we can use the inequality $(x_1+\cdots+x_n)^p\leq
n^{p-1}(x_1^p+\cdots+x_n^p)$ and
\begin{eqnarray*}
\longeqn \EE^0\Bigl[\sum_{n\in\ZZ}
 K^\sigma(\xi_n)\1_{[R^*_-(0),R^*_+(0))}(T_n)\Bigr]^2\\
&\leq& \EE^0\Bigl[\sum_{n\in\ZZ}A([R^*_0,R^*_1))
[K^\sigma(\xi_n)]^2 \1_{[R^*_-(0),R^*_+(0))}(T_n)\Bigr]\\
&\leq& \EE^0[A([R^*_0,R^*_1))]^2[K^\sigma(\xi_0)]^2\\[2mm]
&\leq& \sqrt{\EE^0[A([R^*_0,R^*_1))]^4}
       \sqrt{\EE^0[K^\sigma(\xi_0)]^4},
\end{eqnarray*}
which is finite from \ref{ass:tp}-\romi\ and \ref{ass:tp}-\romii.
Here, the second inequality uses Lemma~\ref{lem:sum}.

Summing up our results, we can apply the Dominated Convergence
Theorem:
\begin{eqnarray*}
J'_r(\theta)
&\egaldef& \lim_{h\to0+}\EE[f(W_{\theta+h}(0))-f(W_\theta(0))]\\
&=&        \lim_{h\to0+}\lambda\EE^0{1\over h}\Phi(\theta,h)\\
&=&        \lambda\EE^0\lim_{h\to0+}{1\over h}\Phi(\theta,h)
\end{eqnarray*}

This gives equation (\ref{eq:tp:Jr}). The case of $h<0$ is handled in
the same way and gives equation (\ref{eq:tp:Jl})---loosely speaking,
the above cases used the sign of $W_{\theta+h}(0)-W_\theta(0)$; this
sign is inverted if $h<0$. This concludes the proof of the theorem.
\end{proof}

\begin{remark}
Assumption \ref{ass:tp} ensures that
$\EE^0\bigl[f(W^*(0))K^W_\theta(0)\bigr]<\infty$. If we know that $f$
is bounded, for example, the only assumptions we need are
\begin{enumerate}
\item $\EE^0[K^\sigma(\xi_0)]^2<\infty;$
\item $\EE^0[A([R_0^*,R_1^*))]^2<\infty.$
\end{enumerate}
This reduced set of assumptions can for instance be used in
Example~\ref{ex:proba}.
\end{remark}

It is important to point out that Corollary~\ref{cor:princ} cannot
always be applied. We show such a case in next example :

\begin{figure}
\centering
\input{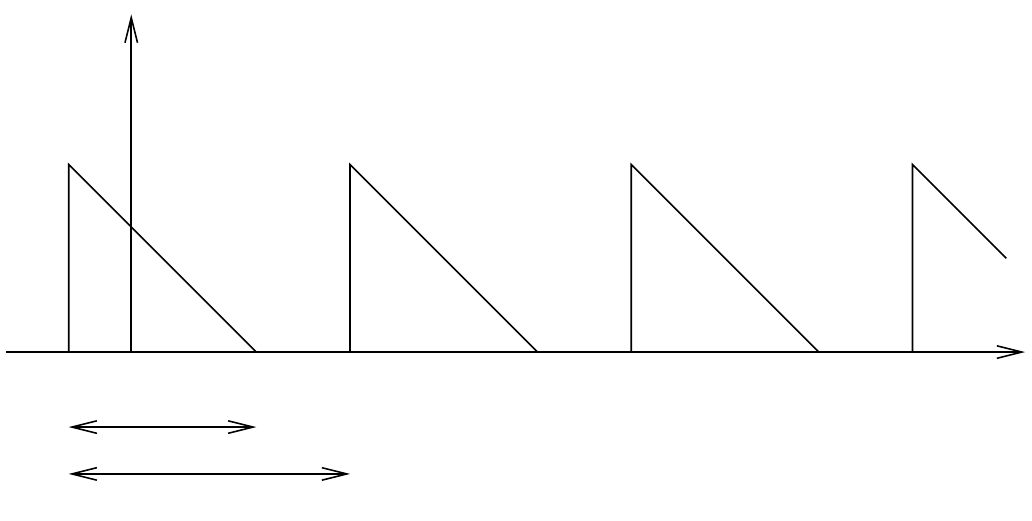tex}
\caption{workload of the D/D/1 queue.}
\label{fig-deter}
\end{figure}

\begin{example}
Consider a D/D/1 queue, that is with deterministic inter-arrival time
$\tau$ and service time $\theta<\tau$. In order to have a stationary
queue, $T_0$ must be uniformly spread in $[-\tau,0]$. As we can see in
Figure~\ref{fig-deter}, we have
$$
W(T_n)=\theta,\ W(T_n-)=0,\ W'_\theta(0)=1\ \PP^0\mbox{-a.s.}
$$

For $x>0$, take $f(w)=\1_{\{w\geq x\}}$ as in Example~\ref{ex:proba}.
Then if $\theta\leq x$, $J(\theta)=0$, else
$$
J(\theta)=\int_{-\tau}^0\1_{\{\theta+t\geq x\}}{dt \over \tau}
  ={\theta-x\over\tau}.
$$

Finally $J(\theta)\equiv \PP(W_\theta(0)\geq x)
=\left({\theta-x\over\tau}\right)^+$, which is not differentiable at
point $\theta=x$. Besides,
\begin{eqnarray*}
J'_r(\theta)&=& \lambda\EE^0[\1_{\{\theta\geq x\}}-\1_{\{0\geq x\}}]\\
            &=& {1\over \tau}\1_{\{\theta\geq x\}}\\
J'_l(\theta)&=& \lambda\EE^0[\1_{\{\theta\geq x\}}-\1_{\{0\geq x\}}
                 +\1_{\{0=x\}}-\1_{\{\theta=x\}}]\\
            &=& {1\over \tau}\1_{\{\theta> x\}}
\end{eqnarray*}
\end{example}

\section{Second order derivative.}\label{sec:2ndorder}

The method used in Section~\ref{sec:service} can be used for
higher-order derivatives. We need assumptions on the properties of our
system and some new moment conditions:

\begin{ass}\label{ass:der2:struct}
$G$ and $f$ verify the following:
\begin{enumerate}
\item $\theta\mapsto G(\xi,\theta)$ is twice differentiable and
there exists a function $\xi\mapsto K^{\sigma'}(\xi)$ such that
$$
|G(\xi,\theta+2h)-2G(\xi,\theta+h)+G(\xi,\theta)| \leq
h^2K^{\sigma'}(\xi); 
$$
\item $w\mapsto f(w)$ is non-decreasing and differentiable.
\end{enumerate}
\end{ass}

\begin{ass}\label{ass:der2:moments}
The following inequalities hold:
\begin{enumerate}
\item $\EE^0[K^\sigma(\xi_0)]^8<\infty;$
\item $\EE^0[K^{\sigma'}(\xi_0)]^4<\infty;$
\item $\EE^0[A([R^*_0,R^*_1))]^8<\infty;$
\item $\EE^0[f(W^*(0))]^2<\infty;$
\item $\EE^0[\sup_\theta f'(W_\theta(0))]^2<\infty;$
\item $\EE^0[\sup_\theta f'(W_\theta(T_1-))]^2<\infty.$
\end{enumerate}
\end{ass}

The main result of this section is:

\begin{thm}\label{thm:der2}
Assume \ref{ass:service}, \ref{ass:der2:struct} and
\ref{ass:der2:moments} hold; then $J$ admits a right second derivative
with respect to $\theta$ given by
\begin{eqnarray}
J''_r(\theta)
& = & \lambda\EE^0\biggl[W''_\theta(0)
              \Bigl[f(W_\theta(0))-f(W_\theta(T_1-))\Bigr]\nonumber\\
&   & \mbox{\qquad}+[W'(0)]^2
              \Bigl[f'(W_\theta(0))-f'(W_\theta(T_1-))\nonumber\\
&   & \mbox{\qquad}-\1_{\{W'_\theta(0)<0\}}
         [\mu_{f'}(\{W_\theta(0)\})-\mu_{f'}(\{W_\theta(T_1-)\})]
    \Bigr]\biggr],\label{eq:der2:Jr}
\end{eqnarray}
and its left second derivative is
\begin{eqnarray}
J''_l(\theta)
& = & \lambda\EE^0\biggl[W''_\theta(0)
              \Bigl[f(W_\theta(0))-f(W_\theta(T_1-))\Bigr]\nonumber\\
&   & \mbox{\qquad}+[W'(0)]^2
              \Bigl[f'(W_\theta(0))-f'(W_\theta(T_1-))\nonumber\\
&   & \mbox{\qquad}-\1_{\{W'_\theta(0)>0\}}
         [\mu_{f'}(\{W_\theta(0)\})-\mu_{f'}(\{W_\theta(T_1-)\})]
    \Bigr]\biggr].\label{eq:der2:Jl}
\end{eqnarray}
\end{thm}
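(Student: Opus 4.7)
The plan is to differentiate once more the formula
$$
J'(\theta)=\lambda\EE^0 W'_\theta(0)\bigl[f(W_\theta(0))-f(W_\theta(T_1-))\bigr]
$$
provided by Corollary~\ref{cor:princ}, which applies under \ref{ass:der2:struct}-\romii\ since $f$ is differentiable, hence continuous. I would form the increment ratio $(J'(\theta+h)-J'(\theta))/h$ and split it by the product rule into two pieces: one in which $(W'_{\theta+h}(0)-W'_\theta(0))/h$ multiplies the $\theta+h$ bracket, and one in which $W'_\theta(0)$ multiplies the difference of brackets in $\theta$ and $\theta+h$. The first piece converges to $\lambda\EE^0 W''_\theta(0)[f(W_\theta(0))-f(W_\theta(T_1-))]$, the leading term in~(\ref{eq:der2:Jr}). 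Both the existence of $W''_\theta(0)$ and a Lipschitz-type dominating variable $K^{W'}_\theta(0)\leq\sum_n K^{\sigma'}(\xi_n)\1_{[R^*_-(0),R^*_+(0))}(T_n)$ would follow from \ref{ass:der2:struct}-\romi, carried through the busy-period construction of Section~\ref{sec:construction} exactly as $K^W_\theta(0)$ was obtained in Theorem~\ref{thm:princ}.

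For the second piece I would perform the level-crossing analysis of Theorem~\ref{thm:princ} on $f'$ rather than on $f$. Since $f$ is differentiable with $f'$ of bounded variation, $f$ is locally Lipschitz, hence absolutely continuous, and $f(b)-f(a)=\int_a^b f'(w)\,dw$. Writing
$$
\frac{f(W_{\theta+h}(0))-f(W_\theta(0))}{h}
=\frac{1}{h}\int_{W_\theta(0)}^{W_{\theta+h}(0)} f'(w)\,dw,
$$
the direction from which the shrinking interval approaches $W_\theta(0)$ is dictated by the sign of $W'_\theta(0)$. If $W'_\theta(0)>0$ the average of $f'$ over that interval tends to the right limit $f'(W_\theta(0))$ (c\`adl\`ag convention), whereas if $W'_\theta(0)<0$ it tends to $f'(W_\theta(0)-)=f'(W_\theta(0))-\mu_{f'}(\{W_\theta(0)\})$. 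Multiplying by $W'_\theta(0)$ gives the composite limit
$$
W'_\theta(0) f'(W_\theta(0))-\1_{\{W'_\theta(0)<0\}}W'_\theta(0)\mu_{f'}(\{W_\theta(0)\}),
$$
and the parallel computation at $T_1-$ furnishes the $\mu_{f'}(\{W_\theta(T_1-)\})$ correction. Because $W'_\theta$ is constant on each $[T_n,T_{n+1})$ contained in a busy period, the outer factor $W'_\theta(0)$ combines with the inner $W'_\theta(0)$ (and with $W'_\theta(T_1-)$, which equals $W'_\theta(0)$ on $\{W_\theta(T_1-)>0\}$) to produce the squared factor $[W'_\theta(0)]^2$ of~(\ref{eq:der2:Jr}); the idle-period cases match the pattern of cases~1$'$ and~2$'$ of Theorem~\ref{thm:princ}.

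To exchange limit and $\EE^0$ I would reproduce the Cauchy--Schwartz domination argument of Theorem~\ref{thm:princ}, now bounding the two pieces by constant multiples of $f(W^*(0))\,K^{W'}_\theta(0)$ and $\sup_\theta f'(W_\theta(0))\,[K^W_\theta(0)]^2$ together with the analogous $T_1-$ terms. Squaring $K^W_\theta(0)$ is what upgrades the second-moment conditions of Theorem~\ref{thm:princ} into fourth-moment conditions here, while placing $K^{W'}_\theta(0)$ in $L^4(\PP^0)$ via the same sum-of-indicators computation as in Theorem~\ref{thm:princ} is what forces the jump from $L^4$ to $L^8$ in \ref{ass:der2:moments}-\romi--\romiii; conditions \ref{ass:der2:moments}-\romv\ and \romvi\ absorb the $f'$ factors. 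The left derivative~(\ref{eq:der2:Jl}) is obtained for $h<0$: this reverses the sign of $W_{\theta+h}(0)-W_\theta(0)$ and hence replaces $\1_{\{W'_\theta(0)<0\}}$ by $\1_{\{W'_\theta(0)>0\}}$.

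The main obstacle I expect is the careful bookkeeping of the jumps of $f'$ at the two endpoints $W_\theta(0)$ and $W_\theta(T_1-)$---the direct second-order analogue of the case analysis of Figure~\ref{fig-cs}, now entangled with the $W''$ contribution from the product rule---together with the verification that the new dominating variable $K^{W'}_\theta(0)$ sits in $L^4(\PP^0)$ under \ref{ass:der2:moments}-\romii--\romiii, so that the Dominated Convergence Theorem can be applied.
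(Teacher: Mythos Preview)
Your approach is correct but genuinely different from the paper's. The paper does \emph{not} differentiate the first-derivative formula from Corollary~\ref{cor:princ}; instead it works directly with the symmetric second difference
$$
\frac{1}{h^2}\Bigl[J(\theta+2h)-2J(\theta+h)+J(\theta)\Bigr]
 =\frac{\lambda}{h^2}\,\EE^0\Phi_2(\theta,h),
$$
applies the Palm inversion once, and then repeats the geometric level-crossing analysis of Theorem~\ref{thm:princ} on the difference of two bands in the $(t,x)$-plane (Figure~\ref{fig-d2}). The correction triangles now produce integrals of $\mu_f((W_\theta(\cdot),W_\theta(\cdot)+y])$ whose \emph{second}-order expansion yields the $\mu_{f'}$ terms; the $W''_\theta(0)$ contribution appears through $\DDW$, the second difference of the workload. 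Your route---product rule on $J'(\theta)$, then the one-sided Lebesgue differentiation $\frac{1}{h}\int_{W_\theta(0)}^{W_{\theta+h}(0)}f'(w)\,dw\to f'(W_\theta(0)\pm)$---is more modular and arguably more transparent about \emph{why} the jump correction switches sign with $W'_\theta(0)$, but it depends on having Corollary~\ref{cor:princ} in hand; the paper's approach is self-contained and keeps the whole hierarchy (first and second derivative) under a single technique. Both lead to the same dominating bound
$3[\sup_\theta f'(W_\theta(0))+\sup_\theta f'(W_\theta(T_1-))]\,[K^W_\theta(0)]^2+f(W^*(0))\,K^{W'}_\theta(0)$.

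One small correction on your moment bookkeeping: the jump from $L^4$ to $L^8$ in \ref{ass:der2:moments}-\romi\ and \romiii\ is driven by the need for $K^W_\theta(0)\in L^4$ (to pair with $\sup_\theta f'\in L^2$), since the sum-of-indicators bound then reads $\EE^0[K^W_\theta(0)]^4\le\sqrt{\EE^0[A([R^*_0,R^*_1))]^8}\sqrt{\EE^0[K^\sigma(\xi_0)]^8}$. The $K^{W'}_\theta(0)$ term only needs to be in $L^2$, for which $L^4$ on $A$ and $K^{\sigma'}$ suffice---which is exactly why \ref{ass:der2:moments}-\romii\ asks only for a fourth moment of $K^{\sigma'}$.
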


\begin{cor}
Assume \ref{ass:service}, \ref{ass:der2:struct} and
\ref{ass:der2:moments} hold; if $f'$ is continuous or if $W_\theta(0)$
and $W_\theta(T_1-)$ admit densities w.r.t.\ $\PP^0$ then $J(\theta)$
is differentiable twice and
\begin{eqnarray*}
J''(\theta)
& = & \lambda\EE^0\biggl[W''_\theta(0)
              \Bigl[f(W_\theta(0))-f(W_\theta(T_1-))\Bigr]\\
&   & \mbox{\qquad}+[W'(0)]^2
              \Bigl[f'(W_\theta(0))-f'(W_\theta(T_1-))
    \Bigr]\biggr].
\end{eqnarray*}
\end{cor}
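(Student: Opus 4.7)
The plan is to replicate the reduction used in the proof of Corollary~\ref{cor:princ}, now at the second-order level. By Theorem~\ref{thm:der2}, $J$ already admits one-sided second derivatives, and the expressions (\ref{eq:der2:Jr}) and (\ref{eq:der2:Jl}) coincide with the claimed formula modulo an additional correction of the form
$$
-\lambda\EE^0\Bigl[[W'_\theta(0)]^2\,\1_{A}\bigl[\mu_{f'}(\{W_\theta(0)\}) - \mu_{f'}(\{W_\theta(T_1-)\})\bigr]\Bigr],
$$
where $A = \{W'_\theta(0) < 0\}$ in the right-derivative case and $A = \{W'_\theta(0) > 0\}$ in the left one. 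It therefore suffices to show that $\EE^0[[W'_\theta(0)]^2\,\mu_{f'}(\{W_\theta(0)\})] = 0$ and likewise for $W_\theta(T_1-)$; this reduces everything to a purely measure-theoretic statement about the atoms of $\mu_{f'}$.

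First I would dispose of the case where $f'$ is continuous: then $\mu_{f'}$ is atomless, so $\mu_{f'}(\{w\}) = 0$ for every $w$, the correction vanishes pointwise, and the identity $J''_r(\theta) = J''_l(\theta)$ together with the simplified formula follows at once. Next, under the density assumption, I would proceed exactly as in the proof of Corollary~\ref{cor:princ}: the atoms of $\mu_{f'}$ form an at most countable set $D \subset \RR$, so if $\gamma^0$ denotes the $\PP^0$-density of $W_\theta(0)$, then $\PP^0(W_\theta(0) \in D) = \int_D \gamma^0(w)\,dw = 0$, hence $\mu_{f'}(\{W_\theta(0)\}) = 0$ $\PP^0$-almost surely; the same argument applied to $W_\theta(T_1-)$ handles the second atom. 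Multiplication by the (a.s.-finite) factor $[W'_\theta(0)]^2$ preserves this null, so the correction term drops out.

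I do not anticipate any real obstacle here, since the whole argument rests on Theorem~\ref{thm:der2} and amounts to the same two-line measure-theoretic reduction already used in Corollary~\ref{cor:princ}; the mild point to verify in passing is that the simplified right-hand side is $\PP^0$-integrable, which follows from Assumption~\ref{ass:der2:moments} via the Cauchy--Schwarz bounds already developed in the proof of Theorem~\ref{thm:der2}, combined with the elementary estimate $f'(W_\theta(0)) \leq \sup_\theta f'(W_\theta(0))$ and its analogue at $T_1-$.
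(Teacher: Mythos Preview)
Your proposal is correct and matches the paper's intended approach: the paper does not give an explicit proof of this corollary, leaving it implicit by analogy with Corollary~\ref{cor:princ}, and your argument is precisely that analogy---replace $\mu_f$ by $\mu_{f'}$ in the proof of Corollary~\ref{cor:princ} and observe that the correction terms in (\ref{eq:der2:Jr}) and (\ref{eq:der2:Jl}) vanish under either hypothesis.
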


\begin{proof}{of Theorem~\ref{thm:der2}}
As this proof is very similar to that of Theorem~\ref{thm:princ}, we
will omit the parts of it which are not new. We want to compute the
limit as $h\to 0$ of 
$$
{1\over h^2}\EE
  \Bigl[f(W_{\theta+2h}(0))-2f(W_{\theta+h}(0))+f(W_\theta(0))\Bigr]
 ={\lambda\over h^2}\EE^0\Phi_2(\theta,h),
$$
with
\begin{eqnarray*}
\Phi_2(\theta,h)& \egaldef& 
\int_{\RR_+}\intsurt\Bigl[
   [\1_{\{W_{\theta+2h}(t)>x\}}-\1_{\{W_{\theta+h}(t)>x\}}]\\
&  & \mbox{\qquad}-[\1_{\{W_{\theta+h}(t)>x\}}-\1_{\{W_\theta(t)>x\}}]
        \Bigr]dt\,\mu_f(dx).
\end{eqnarray*}

We will once more distinguish two important cases among all possible
ones, depending on the sign of $W'_\theta(0)$. Suppose first that
$h>0$.

\begin{envthm}{Case 1:}
\begin{figure}
\centering
\input{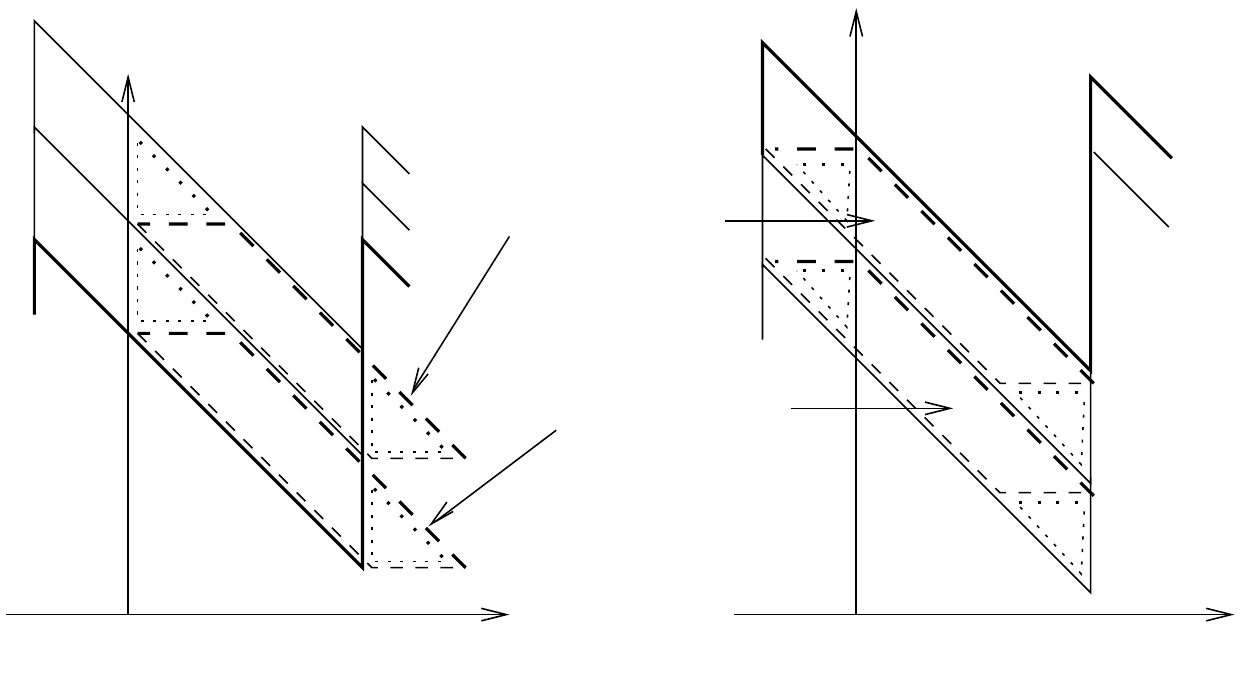tex}
\caption{computation of $\Phi_2$ in cases 1 and 2}\label{fig-d2}
\end{figure}
$W'_\theta(0)>0$; for $h$ small enough, 
$W_\theta(0)<W_{\theta+h}(0)<W_{\theta+2h}(0)$---see Figure
\ref{fig-d2}. We have here to subtract the areas of two bands
which are of the same sort as in Theorem~\ref{thm:princ}:
\begin{eqnarray*}
\Phi_2(\theta,h)
& = & \DWh\Bigl[f(W_{\theta+h}(0))-f(W_{\theta+h}(T_1-))\Bigr]\\
&   & \mbox{}-\DW\Bigl[f(W_\theta(0))-f(W_\theta(T_1-))\Bigr]\\[2mm]
&   & \mbox{}+A_{\theta,h}(0)-A_{\theta,h}(T_1-)
\end{eqnarray*}
where
\begin{eqnarray*}
A_{\theta,h}(t)
& = & \intDWh\mu_f((W_{\theta+h}(t),W_{\theta+h}(t)+y])\,dy\\
&   & \mbox{}-\intDW\mu_f((W_\theta(t),W_\theta(t)+y])\,dy.
\end{eqnarray*}

The main term is equal to
\begin{eqnarray*}
&&\DWh\Bigl[f(W_{\theta+h}(0))-f(W_{\theta+h}(T_1-))
 -f(W_{\theta}(0))+f(W_{\theta}(T_1-))\Bigr]\\
&&\mbox{}+\DDW\Bigl[f(W_{\theta}(0))-f(W_{\theta}(T_1-))\Bigr].
\end{eqnarray*}

Moreover,
\begin{eqnarray*}
A_{\theta,h}(0)
& = & \intDW\unskip\Bigl[f(W_{\theta+h}(0)+y)-f(W_{\theta+h}(0))\\
&   & \mbox{\qquad}-f(W_{\theta}(0)+y)+f(W_{\theta}(0))\Bigr]dy 
      + o(h^2)\\
& = & \intDW hW'_\theta(0)\mu_{f'}((W_\theta(0),W_\theta(0)+y])dy+o(h^2)
\end{eqnarray*}

As in Theorem~\ref{thm:princ}-case 1, 
$\lim_{h\to0}A_{\theta,h}(0)/h^2=0$; the limit is the same for
$A_{\theta,h}(T_1-)$. Consequently,
\begin{eqnarray*}
\lim_{h\to0+}{1\over h^2}\Phi_2(\theta,h)
& = & [W'_\theta(0)]^2
         \Bigl[f'(W_{\theta}(0))-f'(W_{\theta}(T_1-))\Bigr]\\
&   & \mbox+W''_\theta(0)
         \Bigl[f(W_{\theta}(0))-f(W_{\theta}(T_1-))\Bigr].
\end{eqnarray*}

\end{envthm}

\begin{envthm}{Case 2:}
$W'_\theta(0)<0$; for $h$ small enough, 
$W_\theta(0)>W_{\theta+h}(0)>W_{\theta+2h}(0)$ and
\begin{eqnarray*}
\Phi_2(\theta,h)
& = & -1\cdot\biggl\{-\DWh
      \Bigl[f(W_{\theta+h}(0))-f(W_{\theta+h}(T_1-))\Bigr]\\
&   & \mbox{\qquad}-\DW
      \Bigl[f(W_\theta(0))-f(W_\theta(T_1-))\Bigr]\\[2mm]
&   & \mbox{\qquad}-B_{\theta,h}(0)+B_{\theta,h}(T_1-)\biggr\}
\end{eqnarray*}
where
\begin{eqnarray*}
B_{\theta,h}(t)
& \egaldef & \intmDWh\mu_f((W_{\theta+h}(t)-y,W_{\theta+h}(t)])\,dy\\
&          & \mbox{}-\intmDW\mu_f((W_\theta(t)-y,W_\theta(t)])\,dy.
\end{eqnarray*}

While the main part has the same limit as in case 1, we have
\begin{eqnarray*}
B_{\theta,h}(0)
& = & \intmDW hW'_\theta(0)\mu_{f'}((W_\theta(0)-y,W_\theta(0)])dy+o(h^2)\\
& = & -h^2[W'_\theta(0)]^2\mu_{f'}(\{W_\theta(0)\})+o(h^2).
\end{eqnarray*}

Finally, in case 2,

\begin{eqnarray*}
\lim_{h\to0+}{1\over h^2}\Phi_2(\theta,h)
& = & [W'_\theta(0)]^2
         \Bigl[f'(W_{\theta}(0))-f'(W_{\theta}(T_1-))\Bigr]\\
&   & \mbox{}+W''_\theta(0)
         \Bigl[f(W_{\theta}(0))-f(W_{\theta}(T_1-))\Bigr]\\
&   & \mbox{}-[W'_\theta(0)]^2
         \Bigl[\mu_{f'}(\{W_\theta(0)\})
              -\mu_{f'}(\{W_\theta(T_1-)\})\Bigr].
\end{eqnarray*}

\end{envthm}

Besides,
\begin{eqnarray*}
\Bigl|{1\over h^2}\Phi_2(\theta,h)\Bigr|
& \leq & 3\Bigl[\sup_\theta f'(W_\theta(0))
               +\sup_\theta f'(W_\theta(T_1-))\Bigr]
            \Bigl[K^W_\theta(0)\Bigr]^2\\
&      & \mbox{}+f(W^*(0))K^{W'}_\theta(0),
\end{eqnarray*}
where $K^W_\theta(0)$ is the same as in Theorem~\ref{thm:princ} and
$$
K^{W'}_\theta(0) \egaldef \sum_{n\in\ZZ}
            K^{\sigma'}(\xi_n)\1_{[R^*_-(0),R^*_+(0))}(T_n).
$$

As in theorem \ref{thm:princ}, we use Lemma \ref{lem:sum},
Cauchy-Schwartz inequality and assumption \ref{ass:der2:moments} to
prove that $|\Phi_2(\theta,h)/h^2|$ has a finite mean under $\PP^0$.
Using the Dominated Convergence Theorem, we find expressions
(\ref{eq:der2:Jr}) and (\ref{eq:der2:Jl}) for the second derivatives
of $J$.
\end{proof}

\section{Other parameters of the queue.}\label{sec:other}
\def\DW{{\Delta W_{\nu,h}}}
\def\bW{\bar{W}}

Let us consider a setting slightly different from the original one: we
still deal with a G/G/1 queue, but now working at speed $\nu$.
Lindley's equation for the workload of the queue reads:
$$
W_\nu(t)=\Bigl(W_\nu(T_n-)+\sigma_n-\nu(t-T_n)\Bigr)^+,\
t\in[T_n,T_{n+1}).
$$

We address the same problem as in Section~\ref{sec:service} in this
new setting. Our method can apply in this case in the same way as for
variable service times; we will try to keep the notations as close as
possible to those of Section~\ref{sec:construction} to point out the
similitudes, replacing $\theta$ with $\nu$ when necessary.

\begin{remark}
As an anonymous reviewer pointed out, if we define $\bW_\nu(t)=\nu
W_\nu(t)$ we have the relation: 
$$
\bW_\nu(t)=\Bigl(\bW_\nu(T_n-)+{\sigma_n\over\nu}-(t-T_n)\Bigr)^+,\
t\in[T_n,T_{n+1}).
$$
This means that the  queue with workload $\bW_\nu$ fits in the
framework of Sections~\ref{sec:construction} and~\ref{sec:service}.
Nevertheless, what we want to estimate is $(\partial/\partial\nu)\EE f(\nu
\bW_\nu(0))$, which does not follow directly from
Theorem~\ref{thm:princ}. The extra computations needed would cancel
the gain of using Theorem~\ref{thm:princ}. Note also that this result
will be useful in the second part of this section to deal with
parameters of the arrival process.
\end{remark}

Assume that $\nu\geq \nu^*>0$; then as in
Section~\ref{sec:construction}, we can construct all the queues for
different values of $\nu$ so that for all $\nu\geq\nu^*$ and
$t\in\RR$, we have the relation (see Figure~\ref{fig-spee-domi})
\begin{figure}
\centering
\input{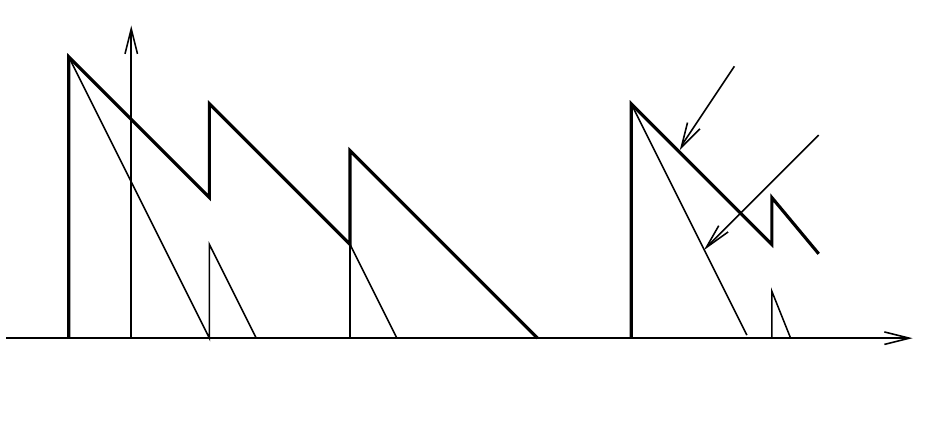tex}
\caption{the domination property for the speed}
\label{fig-spee-domi}
\end{figure}
\begin{equation}\label{eq:spee:domi}
W_\nu(t)\leq W_{\nu^*}(t)
\end{equation}
\begin{equation}\label{eq:spee:bound}
R^*_-(t)\leq R_-(\nu)(t)\leq t < R_+(\nu)(t)\leq R^*_+(t),
\end{equation}

The assumption on moments we need is much like \ref{ass:tp}:

\begin{ass}\label{ass:spee}
The following moments are finite:
\begin{enumerate}
\item $\EE^0[\tau_0]^4<\infty;$
\item $\EE^0[A([R^*_0,R^*_1))]^4<\infty;$
\item $\EE^0[f(W_{\nu^*}(0))]^2<\infty.$
\end{enumerate}
\end{ass}

The first real difference with the results of
Section~\ref{sec:service} is that the expressions for the derivative
use a primitive of $f$, whereas only $f$ appeared in
Theorem~\ref{thm:princ}.

\begin{thm}\label{thm:spee}
Let $F$ be a primitive of $f$; if \ref{ass:spee} holds, then $J$ has a
right-hand derivative equal to:
\begin{eqnarray}\label{eq:thm:spee}
J'_r(\nu)
& = & {\lambda\over\nu^2}\EE^0\biggl\{
      \nu W'_\nu(0)\Bigl[f(W_\nu(0))-f(W_\nu(T_1-))\Bigr]\nonumber\\
&   & \mbox{}+F(W_\nu(0))-F(W_\nu(T_1-))\nonumber\\
&   & \mbox{}-[W_\nu(0)-W_\nu(T_1-)]f(W_\nu(T_1-))\biggr\}
\end{eqnarray}
and its left-hand derivative is 
\begin{eqnarray*}
J'_l(\nu)
& = & {\lambda\over\nu^2}\EE^0\biggl\{
      \nu W'_\nu(0)\Bigl[f(W_\nu(0))-f(W_\nu(T_1-))\Bigr]\\
&   & \mbox{}+F(W_\nu(0))-F(W_\nu(T_1-))\\[1mm]
&   & \mbox{}-[W_\nu(0)-W_\nu(T_1-)]f(W_\nu(T_1-))\\[2mm]
&   &\mbox{}+ \nu W'_\nu(0)
           [\mu_f(\{W_\nu(0)\})-\mu_f(\{W_\nu(T_1-)\})]\\
&   &\mbox{}-[W_\nu(0)-W_\nu(T_1-)]\mu_f(\{W_\nu(T_1-)\})\biggr\}.
\end{eqnarray*}

If $f$ is continuous or if both $W_\nu(0)$ and $W_\nu(T_1-)$ admit
densities w.r.t.\ $\PP^0$, then $J$ is differentiable and its
derivative is equal to $J'_r$.
\end{thm}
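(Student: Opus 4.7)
The plan is to combine Palm inversion with the piecewise-affine structure of $W_\nu$ on busy periods. On each busy portion of $[0,T_1)$ one has $W_\nu(t)=W_\nu(0)-\nu t$, so the change of variable $u=W_\nu(t)$ (giving $dt=-du/\nu$) is available; an idle portion only contributes $f(0)\cdot(\text{idle length})$, which is a constant in $\nu$ once one subtracts $f(0)$ from $f$. After this harmless reduction to $f(0)=0$, one obtains the closed-form representation
\[
J(\nu)=\frac{\lambda}{\nu}\,\EE^0\bigl[F(W_\nu(0))-F(W_\nu(T_1-))\bigr],
\]
which is then differentiated in $\nu$ to yield~(\ref{eq:thm:spee}). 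The explicit $1/\nu$ factor accounts for the $1/\nu^2$ prefactor and the $F$-difference contribution; the chain rule applied to $F\circ W_\nu(0)$ gives the $\nu W'_\nu(0)f(W_\nu(0))$ part of the main term; and for the endpoint $W_\nu(T_1-)$ one uses $W'_\nu(T_1-)=W'_\nu(0)-T_1$ on busy portions together with the identity $T_1 f(W_\nu(T_1-))=\frac{1}{\nu}[W_\nu(0)-W_\nu(T_1-)]f(W_\nu(T_1-))$, which remains valid across idle portions since $f(W_\nu(T_1-))=f(0)=0$ there.

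The real subtlety is that $F$ is a primitive of a c\`adl\`ag function and so is only almost-everywhere differentiable when $\mu_f$ has atoms. I would therefore not invoke the classical chain rule blindly, but compute the one-sided difference quotients
\[
\frac{F(W_{\nu+h}(0))-F(W_\nu(0))}{h}=\frac{1}{h}\int_{W_\nu(0)}^{W_{\nu+h}(0)}f(u)\,du
\]
directly. By right-continuity of $f$, this tends to $f(W_\nu(0))\,W'_\nu(0)$ when $W'_\nu(0)>0$ and to $f(W_\nu(0)-)\,W'_\nu(0)$ when $W'_\nu(0)<0$; the jump $f(W_\nu(0))-f(W_\nu(0)-)=\mu_f(\{W_\nu(0)\})$ is exactly what generates the atom correction $\1_{\{W'_\nu(0)<0\}}\mu_f(\{W_\nu(0)\})$ appearing in $J'_l$ but not in $J'_r$. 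The same analysis applied at $W_\nu(T_1-)$, where the relevant sign is now that of $W'_\nu(T_1-)=W'_\nu(0)-T_1$, produces the second atom correction in the left-hand derivative.

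The interchange of $\EE^0$ and $\lim_{h\to 0}$ follows the strategy of Theorem~\ref{thm:princ}, with a Lipschitz constant built from inter-arrival times rather than from the service-time distribution. Because changing $\nu$ into $\nu+h$ alters only the draining rate, Lindley's equation and the boundary relations~(\ref{eq:spee:bound}) give
\[
\frac{|W_{\nu+h}(0)-W_\nu(0)|}{h}\leq K^W_\nu(0)\egaldef\sum_{n\in\ZZ}\tau_n\1_{[R^*_-(0),R^*_+(0))}(T_n),
\]
and together with the domination (\ref{eq:spee:domi}) this majorises the difference quotient by a constant multiple of $f(W_{\nu^*}(0))\,K^W_\nu(0)$. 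Cauchy--Schwartz and Lemma~\ref{lem:sum} then deliver the required $\PP^0$-integrability under Assumption~\ref{ass:spee}, with $\EE^0[\tau_0]^4<\infty$ now playing the role that $\EE^0[K^\sigma(\xi_0)]^4<\infty$ played in Theorem~\ref{thm:princ}. The case $h<0$ is identical up to swapping right- and left-continuity of $f$; the final differentiability statement under $f$-continuity or density of $W_\nu(0)$ and $W_\nu(T_1-)$ follows exactly as in Corollary~\ref{cor:princ}. The main obstacle throughout is the careful bookkeeping of the one-sided atoms of $\mu_f$ at both endpoints in accordance with the (possibly oppositely signed) values $W'_\nu(0)$ and $W'_\nu(T_1-)$.
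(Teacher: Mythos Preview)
Your approach differs genuinely from the paper's. The paper works directly with the level-crossing integral
\[
\Phi(\nu,h)=\int_{\RR_+}\int_0^{T_1}\bigl[\1_{\{W_{\nu+h}(t)>x\}}-\1_{\{W_\nu(t)>x\}}\bigr]\,dt\,\mu_f(dx)
\]
and evaluates it geometrically: because the two workload trajectories have slopes $-\nu$ and $-(\nu+h)$, the region between them on $[0,T_1)$ is a trapezium rather than a parallelogram, and its $\mu_f\otimes dt$-area produces the $F$-difference term directly, with triangular correction pieces at the two endpoints handled as in Theorem~\ref{thm:princ}. You instead perform the change of variables $u=W_\nu(t)$ inside the Palm inversion to obtain the closed form $J(\nu)=\frac{\lambda}{\nu}\,\EE^0[F(W_\nu(0))-F(W_\nu(T_1-))]$ first, and then differentiate that. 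Your route is more direct for this particular parameter and exploits the piecewise-affine structure of $W_\nu$ more fully; the paper's route stays parallel to the level-crossing machinery of Theorem~\ref{thm:princ} and is more uniform across the several parameter types treated. The domination argument you give for the Lipschitz bound $K^W_\nu(0)=\sum_n\tau_n\1_{[R^*_-(0),R^*_+(0))}(T_n)$ is exactly the one the paper uses.

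There is, however, a gap in your atom bookkeeping. In this coupling $W_\nu(0)$ is non-increasing in $\nu$ by~(\ref{eq:spee:domi}), so $W'_\nu(0)\le 0$ always; likewise $W'_\nu(T_1-)=W'_\nu(0)-T_1<0$ on busy portions, so the two endpoint derivatives are \emph{not} ``possibly oppositely signed'' as you suggest. Hence for $h>0$ one has $W_{\nu+h}(0)\le W_\nu(0)$, your quotient $\frac{1}{h}\int_{W_\nu(0)}^{W_{\nu+h}(0)}f(u)\,du$ approaches from below, and by your own argument the limit is $f(W_\nu(0)-)\,W'_\nu(0)$: the atom $\mu_f(\{W_\nu(0)\})$ therefore enters $J'_r$, not $J'_l$. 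This is exactly the pattern of Case~2 in the proof of Theorem~\ref{thm:princ}, where $W'(0)<0$ places the atom in the right derivative. You seem to have assigned the atoms so as to match the theorem statement rather than to follow your own derivation through; carrying the computation out carefully (including the $-1/\nu^2$ from differentiating the explicit $1/\nu$ factor) in fact yields signs opposite to those displayed in~(\ref{eq:thm:spee}) on the $F$-difference and $[W_\nu(0)-W_\nu(T_1-)]f(W_\nu(T_1-))$ terms, and swaps the roles of $J'_r$ and $J'_l$ for the atom corrections.
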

\begin{remark}
The expressions in Theorem~\ref{thm:spee} seem really complicated when
compared to those obtained in Theorem~\ref{thm:princ}; in fact, in the
case where $f$ is differentiable, the inversion formula applied to
(\ref{eq:thm:spee}) gives the classic IPA formula
$$ J'(\nu)=\EE W'_\nu(0)f(W_\nu(0)).  $$ The complexity of
(\ref{eq:thm:spee}) comes from the fact that $W'_\nu(t)$ is not
constant on $[T_0,T_1)$.
\end{remark}

\begin{proof}{of Theorem \ref{thm:spee}}
We once more proceed as in the proof of Theorem~\ref{thm:princ}---more
details can be found in~\cite{BreLas:1}. Define
$$
\Phi(\nu,h)
  \egaldef \int_{\RR_+}\intsurt
     [\1_{\{W_{\nu+h}(t)>x\}}-\1_{\{W_\nu(t)>x\}}]
     \,dt\,\mu_f(dx)
$$
and remark that
$$
{1\over h}\EE[f(W_{\nu+h}(0))-f(W_\nu(0))]
={\lambda\over h}\EE^0\Phi(\nu,h).
$$

\begin{figure}
\centering
\input{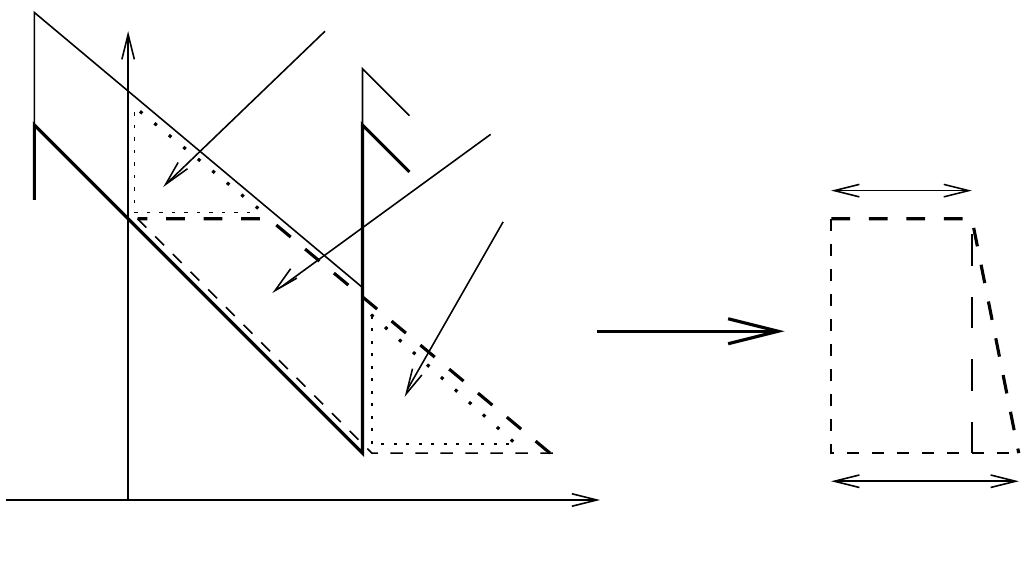tex}
\caption{computation of $\Phi$ for $h>0$.}
\label{fig-spee-case1}
\end{figure}
We will consider only right-hand derivatives; left-hand derivatives
are obtained with the same method. Figure~\ref{fig-spee-case1} shows
how $\Phi$ can be computed: the main area is equal to the area of the
trapezium on the right. As $W_\nu$ is linear in $\nu$, we have
$$\DW(T_1-)-\DW(0) = h T'_1,$$ 
where 
$$
T'_1\egaldef
\min\Bigl[{W_\nu(0)\over\nu},T_1\Bigr]={W_\nu(0)-W_\nu(T_1-)\over\nu}.
$$

The area of the trapezium of Figure~\ref{fig-spee-case1} is equal to
\begin{eqnarray*}
{\cal A}
& = & {\DW(0)\over\nu+h}\Bigl[f(W_\nu(0))-f(W_\nu(T_1-))\Bigr]\\
&   & \mbox{}+\int_0^{hT'_1\over\nu+h}
        \mu_f\Bigl(\Bigl(W_\nu(T_1-),
            W_\nu(0)-{\nu(\nu+h)\over h}y\Bigr]\Bigr)\,dy\\
& = & {h\over\nu+h}\biggl\{{\DW(0)\over h}
        \Bigl[f(W_\nu(0))-f(W_\nu(T_1-))\Bigr]\\
&   & \mbox{\quad}+{1\over\nu}\Bigl[F(W_\nu(0))-F(W_\nu(T_1-))\Bigr]\\
&   & \mbox{\quad}-{W_\nu(0)-W_\nu(T_1-)\over\nu}
                   f(W_\nu(T_1-))\biggr\}.
\end{eqnarray*}

The additional terms read:
\begin{eqnarray*}
& & \int_0^{\DW(0)\over\nu+h}
          \mu_f((W_\nu(0),W_\nu(0)+(\nu+h)y])\,dy\\
& & -\int_0^{\DW(T_1-)\over\nu+h}
          \mu_f((W_\nu(T_1-),W_\nu(T_1-)+(\nu+h)y])\,dy.
\end{eqnarray*}

As we have shown in the proof of Theorem~\ref{thm:princ}, this kind of
expression is an $o(h)$ and
\begin{eqnarray*}
\lim_{h\to0+}\Phi(\nu,h)
& = & {W'_\nu(0)\over\nu}\Bigl[f(W_\nu(0))-f(W_\nu(T_1-))\Bigr]\\
&   & \mbox{}+{1\over\nu^2}
          \Bigl[F(W_\nu(0))-F(W_\nu(T_1-))\Bigl]\\[2mm]
&   & \mbox{}-{W_\nu(0)-W_\nu(T_1-)\over\nu}f(W_\nu(T_1-)).
\end{eqnarray*}

Moreover, as in Theorem~\ref{thm:princ}, we have
$$
\Bigl|{1\over h}\Phi(\nu,h)\Bigr|
 \leq {1\over\nu}[3K^W_\nu(0)+2\tau_0]f(W_{\nu^*}(0)),
$$
where $K^W_\nu(0)$ is a Lipschitz coefficient for $W_\nu(0)$ w.r.t.\
$\nu$, which can be expressed as in Theorem~\ref{thm:princ} as 
$$
K^W_\nu(0)\egaldef \sum_{n\in\ZZ}\tau_n\1_{[R^*_-(0),R^*_+(0))}(T_n).
$$

One can easily check that assumption \ref{ass:spee} suffices to prove
that $|\Phi/h|$ is bounded by an integrable variable. Consequently, we
can apply the Dominated Convergence Theorem and find the expected
result.
\end{proof}

\def\t#1{\tilde#1}
\def\wt#1{\widetilde#1}
\def\wtEE{\mathop{\wt{\EEbase}}\nolimits}
\def\wtPP{\mathop{\wt{\PPbase}}\nolimits}

The method used so far does not apply to the case where the parameter
of interest is a parameter of the inter-arrival times; in this case,
the Palm measure associated to the arrival process depends on the
parameter and the method fails. We show how a change of time scale can
be used in some cases. We consider a G/G/1 queue with inter-arrival
times $\procZ{\tau_n(\alpha)}$, $\alpha\geq\alpha^*>0$ and we will
restrict our attention to the following case:

\begin{ass}\label{ass:arrival}
$\alpha$ is a scale parameter for $\tau_n(\alpha)$, that is
$\tau_n(\alpha)=\alpha\eta_n.$
\end{ass}

Lindley's equation takes the form
\begin{equation}\label{eq:arri:lindley}
W_\alpha(t)
=\Bigl(W_\alpha(T_n(\alpha)-)+\sigma_n-(t-T_n(\alpha))\Bigr)^+,
    \ t\in[T_n(\alpha),T_{n+1}(\alpha))
\end{equation}

Now define a G/G/1 queue {\em with speed $\alpha$} which inter-arrival
times, service times and arrival process are given by:
\begin{eqnarray*}
\t\tau_n   & \egaldef & {\tau_n(\alpha)\over \alpha}\,=\,\eta_n\\
\t\sigma_n & \egaldef & \sigma_n\\
\wt{T}_n   & \egaldef & {T_n(\alpha)\over \alpha}.
\end{eqnarray*}

These processes are stationary with respect to the measurable flow
$\t\theta_t\egaldef \theta_{\alpha t}$ and the queue they define is
stable whenever the original one is; this queue will be referred to as
the ``auxiliary system''. Throughout this section, we will use the
same notations as for the main system, but with a tilde. Lindley's
equation for the auxiliary system reads:
\begin{equation}\label{eq:arri:lindleyaux}
\wt{W}_\alpha(t)
 =\Bigl(\wt{W}_\alpha(\wt{T}_n-)+\t\sigma_n-\alpha(t-\wt{T}_n)\Bigr)^+
  ,\ t\in[\wt{T}_n,\wt{T}_{n+1}).
\end{equation}

Comparing equations~(\ref{eq:arri:lindley})
and~(\ref{eq:arri:lindleyaux}) and noting that the process
$W_\alpha(\alpha t)$ is stationary with respect to the flow
$\t\theta_t$, uniqueness in Loynes' Stability Theorem---see Baccelli
and Br\'emaud~\cite{BacBre:1}--- yields
$$
W_\alpha(t)=\wt{W}_\alpha(t/\alpha).
$$

\begin{figure}
\centering
\input{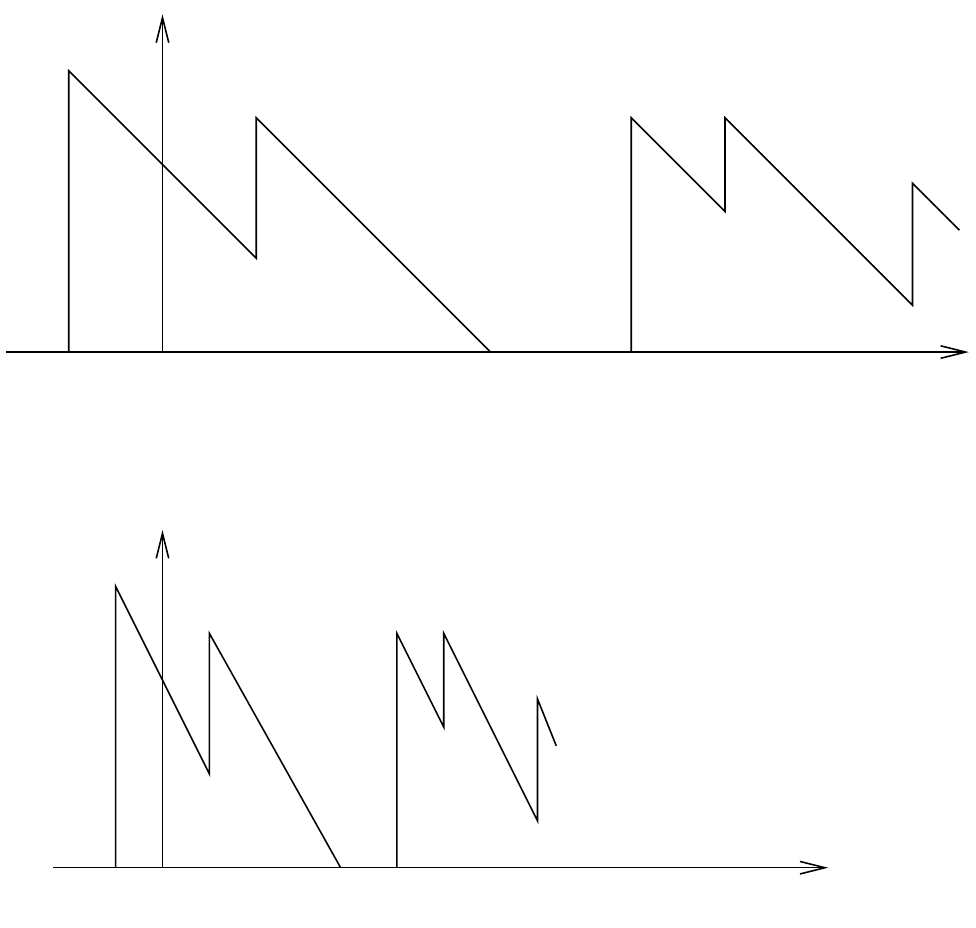tex}
\caption{Change of time scale for $\alpha=2$.}
\label{fig-arri-scale}
\end{figure}
The effect of the change of time scale can be seen on
Figure~\ref{fig-arri-scale}. Moreover,
\begin{eqnarray*}
\t\lambda & = & \EE\wt{A}((0,1])\\
          & = & \EE A((0,\alpha])\,=\,\alpha\lambda(\alpha)\\
\wt{W}'_\alpha(0) & = & W'_\alpha(0).
\end{eqnarray*}

In the computation of $\t\lambda$, we use the fact that the auxiliary
system is defined on the same probability space than the main one. It
has its own Palm measure associated to $\procZ{\wt{T}_n}$, say
$\wtPP^0$. The way to switch between probability measures
$\PP^0_\alpha$ and $\wtPP^0$ will be shown in the proof of
Theorem~\ref{thm:arri}. Before proceeding, we need a set of
\ref{ass:spee}-like conditions:

\begin{ass}\label{ass:arri:moments}
The following conditions hold:
\begin{enumerate}
\item $\EE^0_\alpha[\tau_0]^4<\infty;$
\item $\EE^0_{\alpha^*}[A([R_0^*,R_1^*))]^4<\infty;$
\item $\EE^0_{\alpha^*}[f(W_{\alpha^*}(0))]^2
       <\infty.$
\end{enumerate}
\end{ass}

Using this model, we find the following result:

\begin{thm}\label{thm:arri}
Assume \ref{ass:arrival} and \ref{ass:arri:moments} hold; then 
\begin{eqnarray}
J'_r(\alpha)
& = & {\lambda\over\alpha}\EE^0_\alpha\biggl\{
      \alpha W'_\alpha(0)
      \Bigl[f(W_\alpha(0))-f(W_\alpha(T_1-))\Bigr]\nonumber\\
&   & \mbox{}+\Bigl[F(W_\alpha(0))-F(W_\alpha(T_1-))\Bigl]\nonumber\\
&   & \mbox{}-[W_\alpha(0)-W_\alpha(T_1-)]
              f(W_\alpha(T_1-))\biggr\}\label{eq:arri:Jr}\\
J'_l(\alpha)
& = & {\lambda\over\alpha}\EE^0_\alpha\biggl\{
      \alpha W'_\alpha(0)
      \Bigl[f(W_\alpha(0))-f(W_\alpha(T_1-))\Bigr]\nonumber\\
&   & \mbox{}+\Bigl[F(W_\alpha(0))-F(W_\alpha(T_1-))\Bigl]\nonumber\\
&   & \mbox{}-[W_\alpha(0)-W_\alpha(T_1-)]
              f(W_\alpha(T_1-))\nonumber\\
&   & \mbox{}-\alpha W'_\alpha(0)
      \Bigl[\mu_f(\{W_\alpha(0)\})-\mu_f(\{W_\alpha(T_1-)\})\Bigr]\nonumber\\
&   & \mbox{}+[W_\alpha(0)-W_\alpha(T_1-)]
              \mu_f(\{W_\alpha(T_1-)\})
      \biggr\}.\label{eq:arri:Jl}
\end{eqnarray}

If $f$ is continuous or if $W_\alpha(0)$ and $W_\alpha(T_1(\alpha)-)$
admit densities with respect to $\PP^0_\alpha$ then $J$ is
differentiable w.r.t.\ $\alpha$ and its derivative is equal to $J'_r$.
\end{thm}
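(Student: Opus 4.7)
The plan is to reduce Theorem~\ref{thm:arri} to Theorem~\ref{thm:spee} through the auxiliary system already introduced. Since that system has inter-arrival times $\t\tau_n=\eta_n$ that do not depend on $\alpha$ and a speed equal to $\alpha$, the parameter $\alpha$ enters it exactly as the speed $\nu$ entered the queue of the first half of this section, and Theorem~\ref{thm:spee} (with $\nu=\alpha$) applies directly to give formulas for the right and left derivatives of $\alpha\mapsto\EE f(\wt W_\alpha(0))$. Since $W_\alpha(0)=\wt W_\alpha(0)$, these are also the derivatives of $J(\alpha)$.

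Before applying Theorem~\ref{thm:spee} one must collect the identities that translate quantities of the auxiliary system back to quantities of the original one. From $W_\alpha(t)=\wt W_\alpha(t/\alpha)$ one reads off $W_\alpha(0)=\wt W_\alpha(0)$, $W_\alpha(T_1(\alpha)-)=\wt W_\alpha(\wt T_1-)$ and, by differentiation with respect to $\alpha$, $\wt W'_\alpha(0)=W'_\alpha(0)$; the intensities are related by $\t\lambda=\alpha\lambda(\alpha)$. The crucial remaining identification is that of the two Palm probabilities $\wtPP^0$ and $\PP^0_\alpha$ on the $\sigma$-field of events observable at time $0$. This follows from a Campbell-formula computation: for such an event $A$,
\[
\t\lambda\,\wtPP^0(A)
=\EE\sum_n (\1_A\circ\t\theta_{\wt T_n})\1_{\{\wt T_n\in[0,1]\}}
=\EE\sum_n (\1_A\circ\theta_{T_n})\1_{\{T_n\in[0,\alpha]\}}
=\alpha\lambda(\alpha)\,\PP^0_\alpha(A),
\]
and $\t\lambda=\alpha\lambda(\alpha)$ gives $\wtPP^0(A)=\PP^0_\alpha(A)$.

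With these identifications the proof is essentially a dictionary lookup. Applying Theorem~\ref{thm:spee} to the auxiliary system yields a right derivative of the form $(\t\lambda/\alpha^2)\wtEE^0\{\cdots\}$; substituting $\t\lambda=\alpha\lambda(\alpha)$ turns the prefactor into $\lambda(\alpha)/\alpha$, the identification $\wtPP^0=\PP^0_\alpha$ replaces $\wtEE^0$ by $\EE^0_\alpha$, and the workload identities turn the braces into exactly those of~(\ref{eq:arri:Jr}). The same translation applied to the left-derivative formula of Theorem~\ref{thm:spee} gives~(\ref{eq:arri:Jl}), and the differentiability statement in the continuous/density case follows from the corresponding corollary applied to the auxiliary system.

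The main technical point, and the step to get right, is the identification of the two Palm measures displayed above. Once it is in place, verifying that the hypotheses of Theorem~\ref{thm:spee} hold for the auxiliary system under \ref{ass:arrival} and \ref{ass:arri:moments} is routine: the auxiliary inter-arrival time $\eta_0=\tau_0/\alpha$ has finite fourth moment under $\wtPP^0=\PP^0_\alpha$ by \ref{ass:arri:moments}-\romi, the $*$-system busy periods of the auxiliary queue are the rescaled original ones so \ref{ass:spee}-\romii\ follows from \ref{ass:arri:moments}-\romii, and \ref{ass:spee}-\romiii\ coincides with \ref{ass:arri:moments}-\romiii\ because $\wt W_{\alpha^*}(0)=W_{\alpha^*}(0)$.
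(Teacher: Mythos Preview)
Your proposal is correct and follows essentially the same route as the paper: both reduce the problem to Theorem~\ref{thm:spee} applied to the auxiliary speed-$\alpha$ queue, verify the three moment conditions of Assumption~\ref{ass:spee} from Assumption~\ref{ass:arri:moments}, and then translate the resulting formula back to the original system via the identities $\wt W_\alpha(0)=W_\alpha(0)$, $\wt W_\alpha(\wt T_1-)=W_\alpha(T_1-)$, $\wt W'_\alpha(0)=W'_\alpha(0)$, $\t\lambda=\alpha\lambda(\alpha)$. The only stylistic difference is that you isolate the identification $\wtPP^0=\PP^0_\alpha$ once via a Campbell--Mecke computation and then use it as a dictionary, whereas the paper redoes the Mecke computation separately for each of the three moment conditions; the content is the same.
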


\begin{proof}{}
We have
$$
J(\alpha)=\EE f(W_\alpha(0))=\EE f(\wt{W}_\alpha(0))
$$
where $\wt{W}_\alpha(0)$ is the workload of the auxiliary queue with
speed $\alpha$. We aim to apply Theorem~\ref{thm:spee} to this queue
and then adapt the result to the main queue. The three conditions of
\ref{ass:spee} correspond to the three ones of \ref{ass:arri:moments}: for condition \ref{ass:spee}-\romi, note that
\begin{eqnarray*}
\wtEE^0[\t\tau_0]^4
& = & {1\over \t\lambda}\EE\sum_{n\in\ZZ}
[\t\tau_n]^4\1_{\{\wt{T}_n\in(0,1]\}}\\ & = & {1\over
\alpha\lambda(\alpha)}\EE\sum_{n\in\ZZ}
\Bigl[{\tau_n(\alpha)\over\alpha}\Bigr]^4
\1_{\{T_n(\alpha)\in(0,\alpha]\}}\\ & = &
{1\over\alpha^4}\EE^0_\alpha[\tau_0]^4<\infty
\end{eqnarray*}
and for \ref{ass:spee}-\romii,
\begin{eqnarray*}
\wtEE^0[\wt{A}([\wt{R}^*_0,\wt{R}^*_1))]^4
& = & {1\over \t\lambda}\EE\sum_{n\in\ZZ}
    [\wt{A}([\wt{R}^*_-(\wt{T}_n),\wt{R}^*_+(\wt{T}_n)))]^4
    \1_{\{\wt{T}_n\in(0,1]\}}\\
& = & {1\over \alpha\lambda(\alpha)}\EE\sum_{n\in\ZZ}
    [A([R^*_-(T_n),R^*_+(T_n)))]^4
    \1_{\{T_n(\alpha^*)\in(0,\alpha^*]\}}\\
& = & \EE^0_{\alpha^*}[A([R_0^*,R_1^*))]^4<\infty.
\end{eqnarray*}

Finally, for \ref{ass:spee}-\romiii,
$$
\wtEE^0[f\bigl(\wt{W}_{\alpha^*}(0)\bigr)]^2
 =\EE^0_{\alpha^*}[f\bigl(W_{\alpha^*}(0)\bigr)]^2 <\infty.
$$

So we apply Theorem~\ref{thm:spee} and find for the right-hand derivative:
\begin{eqnarray*}
J'_r(\alpha)
& = & {\t\lambda\over\alpha^2}\wtEE^0\biggl\{
      \alpha\wt{W}'_\alpha(0)
      \Bigl[f(\wt{W}_\alpha(0))-f(\wt{W}_\alpha(\wt{T}_1-))\Bigr]\\
&   & \mbox{}+
       \Bigl[F(\wt{W}_\alpha(0))-F(\wt{W}_\alpha(\wt{T}_1-))\Bigl]\\
&   & \mbox{}-[\wt{W}_\alpha(0)-\wt{W}_\alpha(\wt{T}_1-)]
              f(\wt{W}_\alpha(\wt{T}_1-))\biggr\}.
\end{eqnarray*}
This gives Equation~(\ref{eq:arri:Jr}); the left-hand derivative is
derived similarly.
\end{proof}

\section{Implementation of the method.}
\label{sec:estimate}

The formulas given in preceding sections will be interesting only if
they provide estimates which are \romi\ easy to compute and \romii\
strongly consistent, which means that they converge a.s.\ to their
expected values. In this section we show how the estimate given by
Corollary~\ref{cor:princ} can be used in simulation when the system is
ergodic. In this case, ergodic theorem~(\ref{eq:ergopalm}) applied to
equation~(\ref{eq:cp:Jprime}) reads:
\begin{eqnarray}
J'(\theta) 
&=& \lim_{n\to\infty}{\lambda\over n}\sum_{k=0}^{n-1}
            W'_\theta(T_k)[f(W_\theta(T_k))-f(W_\theta(T_{k+1}-))]\nonumber\\
&=& \lim_{n\to\infty}{\lambda\over n}\sum_{k=0}^{n-1}
            W'_\theta(T_k)[f(W_\theta(T_k))-f(W_\theta(T_k-))].
                       \label{eq:esti:palm}
\end{eqnarray}

The different ingredients of this formula are easy to evaluate once
the simulation of the queue is set up: $W_\theta(T_k)$ and
$W_\theta(T_k-)$ are known when customer $k$ joins the queue; to get
$W'_\theta(T_k)$, we use equation (\ref{eq:lindley}), keeping in mind
that both $W(t)$ and $W'(t)$ are {\em c\`adl\`ag} processes and find:
$$
W'_\theta(T_k)=\cases{\sigma'_k(\theta)& if customer $k$ finds the
system empty\cr
W'_\theta(T_{k-1})+\sigma'_k(\theta)& else.}
$$

As shown in Section~\ref{sec:construction}, $\sigma'_k$ can most of
the time be expressed as a function of $\sigma_k$ and $\theta$, say
$\sigma'_k=D(\sigma_k,\theta)$. So if we define $w_k=W_\theta(T_k-)$
and $d_k=W'_\theta(T_k)$, we have
\begin{eqnarray*}
w_k&=&(w_{k-1}+\sigma_{k-1}-\tau_{k-1})^+\\
d_k&=&d_{k-1}\1{\{w_k>0\}}+D(\sigma_k,\theta),
\end{eqnarray*}
and equation~(\ref{eq:esti:palm}) shows that 
$$
\phi_n\egaldef{\lambda\over n}\sum_{k=0}^{n-1}
            d_k[f(w_k+\sigma_k)-f(w_k)]
$$
is a strongly consistent estimate of $J'(\theta)$. Since our Palm
estimate does not require differentiability for $f$, one will want to
check whether it is as accurate as the classic IPA estimate: if the
system is ergodic, the ergodic theorem~(\ref{eq:ergodic}) of the
appendix applied to equation~(\ref{eq:baseIPA}) gives
\begin{eqnarray}
J'(\theta)
& = & \lim_{t\to\infty}
          {1\over t}\int_0^t W_\theta'(s)f'(W_\theta(s))\,ds\nonumber\\
& = & \lim_{n\to\infty}
          {1\over T_n}\sum_{k=0}^{n-1}
           \int_{T_k}^{T_{k+1}}W_\theta'(s)f'(W_\theta(s))\,ds\nonumber\\
& = & \lim_{n\to\infty}
          {1\over T_n}\sum_{k=0}^{n-1}
            W'_\theta(T_k)[f(W_\theta(T_k))-f(W_\theta(T_{k+1}-))]
                                           \label{eq:esti:classic},
\end{eqnarray}
where all the limits are valid $\PP^0$-a.s.\ or $\PP$-a.s.\
indifferently. In the third equality, we used the fact that
$W'_\theta(t)$ is zero during idle periods. Comparing
equations~(\ref{eq:esti:palm}) and~(\ref{eq:esti:classic}), we see
that the estimates based on the same amount of data give very close
expressions; in fact, they are even equal when $\lambda$ needs to be
estimated. For comparisons between time-average and customer-average
estimates, see for instance Glynn and Whitt~\cite{GlyWhi:1}. The
implementation of an estimate of the second derivative of $J$ would be
done exactly in the same way, except that the formulas involved are
slightly more complicated.

\section*{Appendix: a short introduction to Palm theory.}\label{sec:palm}

In this appendix we give without proof some basic results on Palm
theory; interested readers can refer to Baccelli and
Br\'emaud~\cite{BacBre:1} for a more complete presentation of the
subject. The stationary framework is the following: given a
probability space $(\Omega,{\cal F},\PP)$, let $\theta_t$, $t\in\RR$
be a measurable flow $(\Omega,{\cal F})\mapsto(\Omega,{\cal F})$,
i.e.:
\begin{itemize}
\item $(t,\omega)\mapsto\theta_t \omega$ is measurable w.r.t.\ ${\cal
B}(\RR)\otimes{\cal F}$,
\item $\theta_t$ is bijective for all $t\in\RR$,
\item $\theta_t\circ\theta_s=\theta_{t+s}$ for all $t, s\in\RR$. In
particular, $\theta_0=$identity and $\theta_t^{-1}=\theta_{-t}$.
\end{itemize}

Note that there is nothing common between the flow $\theta_t$ and the
parameter $\theta$ of the queue; these are the traditional notations
in sensitivity analysis and Palm theory. We assume that
$\PP\circ\theta_t=\PP$. Let $\procZ{T_n}$ and $\procZ{U_n}$ be two
simple point process and let $A$ and $D$ be the associated counting
measures, that is, for all borelian set $C\in\RR$,
$$
A(C)=\sum_{n\in\ZZ}\1_C(T_n),\ D(C)=\sum_{n\in\ZZ}\1_C(U_n)
$$
and assume that for each $n\in\ZZ$, $U_n-T_n\egaldef W_n>0$. We take
the convention $T_0\leq0<T_1$ and note:
\begin{eqnarray*}
T_-(t)&=&\sup(T_n:T_n\leq t),\\
T_+(t)&=&\inf(T_n\ :\ T_n> t).
\end{eqnarray*}

$A$ and $D$ are viewed as arrival and departure processes and we note
$X(t)$ a queueing process associated with them. Let $B(t)$ be a non
decreasing {\em c\`adl\`ag}---i.e.\ right continuous with left
limits---real valued process and $Z(t)$ a non-negative real-valued
stochastic process. We assume that these processes are compatible with
the flow $\theta_t$, that is
\begin{eqnarray*}
A(\omega,C+t)&=&A(\theta_t\omega,C)\\
Z(\omega,t)&=&Z(\theta_t\omega,0).
\end{eqnarray*}

Similar equalities hold for $D$ and $X$; if we define
$\lambda_A=\EE[A((0,1])]$, then there exists a probability measure
called the Palm probability of the stationary process
$(A,\theta_t,\PP)$ verifying the Swiss Army Formula
(Br\'emaud~\cite{Bre:2}):
\begin{equation}\label{eq:swiss}
\lambda_A\EE_A^0[\int_{(0,W_0]}Z(s)dB(s)]={1\over t}\EE[\int_{(0,t]}X(s-)Z(s)dB(s)]
\end{equation}

The Swiss Army Formula is not the definition of the Palm measure, but
we will see that it contains this definition and the classic formulas
of Palm theory. We shall add that under $\PP_A^0$, $T_0=0$ a.s. We
derive now some useful formulas from ~(\ref{eq:swiss}). The first one
is the inversion formula: Take $U_n=T_{n+1}$ and $B(s)=s$; then
$X(t)=1$ and
\begin{equation}\label{eq:inversion}
\EE[Z(0)]=\lambda_A\EE_A^0[\int_0^{T_1}Z(s)ds].
\end{equation}

The second formula is Neveu's exchange formula (Neveu~\cite{Nev:1}):
we take $D$ as for the inversion formula and remark that if $B\equiv
A$, (\ref{eq:swiss}) reads
$$
\lambda_A\EE^0_A[Z(0)]={1\over t}\EE[\int_{(0,t]}Z(s)dA(s)].
$$
This is Mecke's definition of Palm probability. Now, if $B$ is a point
process, we use the above equation and (\ref{eq:swiss}) to derive the
exchange formula:
\begin{equation}\label{eq:exchange}
\lambda_A\EE^0_A[\int_0^{T_1}Z(s)dB(s)]=\lambda_B\EE^0_B[Z(0)].
\end{equation}

We will now prove a simple lemma which replaces Wald's identity for
stationary systems:

\begin{lem}\label{lem:sum}
Let $\procZ{R_n}$ be a stationary stochastic point process with
associated measure $B$. The following holds:
\begin{equation}\label{eq:lem:sum}
\EE_A^0\Bigl[\int_{[R_0,R_1)}Z(s)dA(s)\Bigr]=\EE_A^0[A([R_0,R_1))\,Z(0)].
\end{equation}
\end{lem}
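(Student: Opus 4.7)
The plan is to apply Neveu's exchange formula~(\ref{eq:exchange}) in its swapped form
$$\lambda_B\EE_B^0\Bigl[\int_0^{R_1}Y(s)\,dA(s)\Bigr]=\lambda_A\EE_A^0[Y(0)]$$
(legitimate because (\ref{eq:exchange}) holds for generic pairs of compatible simple point processes) to two carefully chosen compatible integrands whose values at $0$ reproduce the two sides of (\ref{eq:lem:sum}), and to check that both applications produce the same quantity under $\PP_B^0$. Set
$$\Phi(\omega)\egaldef \int_{[R_0,R_1)} Z(s)\,dA(s), \qquad \Psi(\omega)\egaldef A([R_0,R_1))\,Z(0),$$
so that the left-hand side of (\ref{eq:lem:sum}) is $\EE_A^0[\Phi]$ and the right-hand side is $\EE_A^0[\Psi]$.

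The preliminary step is a cycle-invariance check. For any $s$ in the current cycle $[R_0(\omega),R_1(\omega))$, one has $R_0(\theta_s\omega)=R_0(\omega)-s$ and $R_1(\theta_s\omega)=R_1(\omega)-s$; hence $A([R_0,R_1))\circ\theta_s = A([R_0,R_1))$ and $\Phi\circ\theta_s = \Phi$ whenever $s\in[R_0,R_1)$. The quantity $\Psi$ is not cycle-invariant because $Z(0)\circ\theta_s = Z(s)$, so $\Psi\circ\theta_s = A([R_0,R_1))\,Z(s)$ for $s$ in the cycle; this mismatch is exactly what will make the two applications coincide.

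Now apply the swapped exchange formula with $Y=\Phi$. Under $\PP_B^0$ one has $R_0=0$, so every $s\in[0,R_1)$ lies in the current cycle, the integrand $\Phi(\theta_s\omega)$ is constant in $s$ and equal to $\int_{[0,R_1)} Z\,dA$, and integration against $dA$ on $[0,R_1)$ produces the factor $A([0,R_1))$, giving
$$\lambda_A\EE_A^0[\Phi] = \lambda_B\EE_B^0\Bigl[A([0,R_1))\int_{[0,R_1)} Z(s)\,dA(s)\Bigr].$$
Applying the same formula with $Y=\Psi$, and using that $A([R_0,R_1))=A([0,R_1))$ is constant in $s\in[0,R_1)$ under $\PP_B^0$, yields exactly the same right-hand side. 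Dividing through by $\lambda_A$ produces (\ref{eq:lem:sum}).

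The main technical care, rather than a genuine obstacle, lies in verifying the cycle-invariance identities and keeping the boundary conventions straight (left-closed/right-open intervals, and $T_0=0$ $\PP_A^0$-a.s.\ versus $R_0=0$ $\PP_B^0$-a.s.); once this bookkeeping is in place, the double application of Neveu's formula is routine and no new integrability hypothesis beyond that which guarantees finiteness of both sides of (\ref{eq:lem:sum}) is required.
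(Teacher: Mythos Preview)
Your proof is correct and follows essentially the same route as the paper's: both arguments pass from $\PP_A^0$ to $\PP_B^0$ via Neveu's exchange formula, exploit the cycle-invariance of $\Phi$ (the paper's $Y$) on $[R_0,R_1)$, and then identify the result with the right-hand side. The only cosmetic difference is that the paper goes $\PP_A^0\to\PP_B^0\to\PP_A^0$ on a single integrand, whereas you apply the exchange formula separately to $\Phi$ and $\Psi$ and match them at the $\PP_B^0$ level; the underlying computation is identical.
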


\begin{proof}{}
If we note $Y$ the random variable inside the expectation of the
l.h.s.\ of equation~(\ref{eq:lem:sum}), then
$$
Y\circ \theta_{T_i}
=\int_{[R_-(T_i),R_+(T_i))}Z(s)dA(s)
=\sum_{n\in\ZZ}Z(T_n)\1_{[R_-(T_i),R_+(T_i))}(T_n).
$$

Since $R_\pm(T_i)=R_\pm(0)$ if $T_i\in[R_-(0),R_+(0))$,
\begin{eqnarray*}
\sum_{T_i\in[R_-(0),R_+(0))}Y\circ\theta_{T_i}
&=&\sum_{i\in\ZZ}\sum_{n\in\ZZ}
     Z(T_n)\1_{[R_-(0),R_+(0))}(T_n)\1_{[R_-(0),R_+(0))}(T_i)\\
&=&A([R_0,R_1))\sum_{n\in\ZZ}Z(T_n)\,\1_{[R_0,R_1)}(T_n).
\end{eqnarray*}
If we now apply Neveu's exchange formula~(\ref{eq:exchange}) between
$\PP^0_A$ and $\PP^0_B$, we obtain:
\begin{eqnarray*}
\lambda_A\EE_A^0Y
&=&\lambda_B\EE^0_B\sum_{T_i\in[R_-(0),R_+(0))}Y\circ\theta_{T_i}\\
&=&\lambda_B\EE^0_B\Bigl[\sum_{n\in\ZZ}
A([R_0,R_1))\,Z(T_n)\,\1_{[R_0,R_1)}(T_n)\Bigr]\\
&=&\lambda_A\EE^0_A[A([R_0,R_1))\,Z(0)],
\end{eqnarray*}
which is exactly equality~(\ref{eq:lem:sum}).
\end{proof}

Formula~(\ref{eq:lem:sum}) can be seen as an extension of Wald's
identity that can be used for stationary sequences instead of i.i.d.\
variables and applies to any stationary process. It is not as
convenient as Wald's identity is, but is valid in a wider setting.

\begin{remark}
Lemma~\ref{lem:sum} is also a corollary of the extended $H=\lambda G$
formula (6.2) of Br\'emaud~\cite{Bre:2}.
\end{remark}

Palm probabilities can also be given an interpretation which relates
them to simulation. When $(P,\theta_t)$ is ergodic, the ergodic
theorems for $\PP$ and $\PP^0$ read:
\begin{eqnarray*}
\EE[Y]&=& \lim_{t\to\infty}{1\over t}\int_0^tY\circ\theta_sds\\
\EE^0_A[Y]&=&\lim_{n\to\infty}{1\over n}\sum_{k=0}^{n-1}Y\circ\theta_{T_k}
\end{eqnarray*}
which imply that:
\begin{eqnarray}
\EE[Z(0)]&=& \lim_{t\to\infty}{1\over t}\int_0^tZ(s)ds\label{eq:ergodic}\\
\EE^0_A[Z(0)]&=&\lim_{n\to\infty}{1\over n}\sum_{k=0}^{n-1}Z(T_k).\label{eq:ergopalm}
\end{eqnarray}

These equalities are valid $\PP$-a.s.\ and $\PP^0_A$-a.s. This shows
that $\EE[Z(0)]$ is the {\em time-average} of the process $Z(t)$,
whether $\EE^0_A[Z(0)]$ is its {\em customer-average}.

\bibliography{queues}
\bibliographystyle{acm}

\end{document}